\documentclass[12pt,fleqn,leqno]{amsart}

\usepackage{amsfonts,amssymb}
\usepackage[bb=boondox]{mathalfa} 
\usepackage{enumitem}
\usepackage[svgnames]{xcolor}
\usepackage[colorlinks,linkcolor=blue,citecolor=Green]{hyperref}
\usepackage{titlesec}
\usepackage{tikz-cd}
\usetikzlibrary{decorations.pathmorphing}
\usetikzlibrary{snakes}

\setlength{\topmargin}{.375in}
\setlength{\textheight}{8in}
\setlength{\oddsidemargin}{.375in}
\setlength{\evensidemargin}{.375in}
\setlength{\textwidth}{5.75in}

\titleformat{\section}[block]
 {\bfseries}
 {\thesection.}
 {\fontdimen2\font}
 {}
 
\setlist{noitemsep}
\setenumerate{labelindent=\parindent,label=\upshape{(\alph*)}}


\newtheorem{theorem}{Theorem}[section]

\newtheorem{proposition}[theorem]{Proposition}

\theoremstyle{definition}
\newtheorem{remark}[theorem]{Remark}
\newtheorem{example}[theorem]{Example}
\newtheorem{question}{Question}

\DeclareMathOperator{\N}{\mathbb{N}}
\DeclareMathOperator{\R}{\mathbb{R}}
\DeclareMathOperator{\uhr}{\upharpoonright} 

\renewcommand{\emptyset}{\varnothing}
\numberwithin{equation}{section}

\overfullrule=1mm

\begin{document}

\author[V. Gutev]{Valentin Gutev}

\address{Department of Mathematics, Faculty of Science, University of
  Malta, Msida MSD 2080, Malta}

\email{\href{mailto:valentin.gutev@um.edu.mt}{valentin.gutev@um.edu.mt}}

\subjclass[2010]{54C20, 54C30, 54C35, 54E40, 46B40, 26A16}

\keywords{Continuous extension, uniformly continuous extension, normed
  Riesz space, sublinear extension operator, isometry}

\title[Simultaneous Extension of Uniformly Continuous
Functions]{Simultaneous Extension of Continuous and Uniformly
  Continuous Functions}

\begin{abstract}
  The first known continuous extension result was obtained by
  Lebes\-gue in 1907. In 1915, Tietze published his famous extension
  theorem generalising Lebesgue's result from the plane to general
  metric spaces. He constructed the extension by an explicit formula
  involving the distance function on the metric space. Thereafter,
  several authors contributed other explicit extension formulas. In
  the present paper, we show that all these extension constructions
  also preserve uniform continuity, which answers a question posed by
  St.\ Watson. In fact, such constructions are simultaneous for
  special bounded functions. Based on this, we also refine a result of
  Dugundji by constructing various continuous (nonlinear) extension
  operators which preserve uniform continuity as~well.
\end{abstract}

\date{\today}
\maketitle

\section{Introduction}

In his 1907 paper \cite{zbMATH02644079} on Dirichlet's problem,
Lebesgue showed that for a closed subset $A\subset \R^2$ and a
continuous function $\varphi:A\to \R$, there exists a continuous
function $f:\R^2\to \R$ with $f\uhr A=\varphi$.  Here, $f$ is commonly
called a \emph{continuous extension} of $\varphi$, and we also say
that $\varphi$ can be \emph{extended continuously}.\medskip

In 1915, Tietze \cite{Tietze1914a} generalised Lebesgue's result for
all metric spaces.

\begin{theorem}[Tietze, 1915]
  \label{theorem-special-fn-Tietze-v3:1}
  If $(X,d)$ is a metric space and $A\subset X$ is a closed set, then
  each bounded continuous function $\varphi:A\to \R$ can be extended
  to a continuous function $f:X\to \R$.
\end{theorem}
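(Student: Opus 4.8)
The plan is to follow Tietze's original strategy and write down an explicit formula for $f$ in terms of the distance function $d(\cdot,A)$, where $d(x,A)=\inf_{a\in A}d(x,a)$. Since $A$ is closed, $d(x,A)>0$ precisely when $x\in X\setminus A$, and $X\setminus A$ is open. On $A$ I simply set $f\uhr A=\varphi$, while on $X\setminus A$ I define
\[
f(x)=\inf_{a\in A}\left\{\varphi(a)+\frac{d(x,a)}{d(x,A)}-1\right\}.
\]
Because $\varphi$ is bounded, say $m\le\varphi\le M$, and because $d(x,a)/d(x,A)\ge 1$ for every $a\in A$, each quantity in braces is at least $m$, so the infimum is a well-defined real number; feeding a near-minimiser of $d(x,\cdot)$ into the formula shows $f$ is also bounded above, so a final truncation by $\max\{m,\min\{M,\cdot\}\}$ would even recover the original bounds if desired.

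First I would record the elementary observation that drives everything: the penalty $d(x,a)/d(x,A)-1$ is nonnegative and grows without bound unless $a$ lies essentially as close to $x$ as $A$ does. Quantitatively, if the braced quantity comes within a fixed amount of the infimum, then $d(x,a)$ is comparable to $d(x,A)$; hence only points $a$ near $x$ influence the value of $f(x)$. This localisation is what transfers the behaviour of $\varphi$ to $f$. Using it, continuity of $f$ on the open set $X\setminus A$ follows because there $x\mapsto d(x,a)$ is $1$-Lipschitz and $x\mapsto d(x,A)$ is $1$-Lipschitz and bounded away from $0$ near any fixed point, so the relevant family $x\mapsto\varphi(a)+d(x,a)/d(x,A)-1$ is uniformly Lipschitz once $a$ is restricted to the bounded range that can influence the infimum near $x_0$; an infimum of a uniformly Lipschitz family is Lipschitz. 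Continuity of $f$ at an $a\in A$ possessing a neighbourhood inside $A$ is immediate, since there $f=\varphi$.

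The main obstacle, and the real content of the theorem, is continuity of $f$ at a point $a^{*}\in A$ that is a limit of points of $X\setminus A$. For a sequence $x_n\to a^{*}$ with $x_n\in X\setminus A$ I would prove $f(x_n)\to\varphi(a^{*})$ via two matching estimates. For the upper bound, pick a multiplicative near-minimiser $a_n\in A$ with $d(x_n,a_n)\le(1+1/n)\,d(x_n,A)$; then $d(x_n,a_n)\le(1+1/n)\,d(x_n,a^{*})\to 0$ forces $a_n\to a^{*}$, so by continuity of $\varphi$ we get $f(x_n)\le\varphi(a_n)+1/n\to\varphi(a^{*})$. For the lower bound, fix $\epsilon>0$, take $\delta>0$ from continuity of $\varphi$ at $a^{*}$, and split $A$ according to whether $d(a,a^{*})<\delta$ or not: on the first part the braced quantity is at least $\varphi(a)>\varphi(a^{*})-\epsilon$, while on the second part $d(x_n,a)$ stays above roughly $\delta/2$ whereas $d(x_n,A)\to 0$, driving the penalty term above any prescribed level for all large $n$, uniformly in $a$. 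Hence $f(x_n)\ge\varphi(a^{*})-\epsilon$ eventually.

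Combining these with the trivial fact that $f(x_n)=\varphi(x_n)\to\varphi(a^{*})$ along any sequence drawn from $A$, I conclude that $f(x_n)\to\varphi(a^{*})=f(a^{*})$ for every sequence $x_n\to a^{*}$, which gives continuity of $f$ at $a^{*}$. Together with continuity off $A$ and at interior points of $A$, this shows $f$ is continuous on $X$, and $f\uhr A=\varphi$ holds by construction. I expect the bookkeeping in the boundary estimate, namely making the two-case lower bound uniform in $a$, to be the only delicate point; everything else is routine manipulation of the distance function.
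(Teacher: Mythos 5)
Your proposal is correct and follows essentially the same route as the paper: you use Hausdorff's formula \eqref{eq:special-fn-Tietze-v3:1}, prove continuity at boundary points of $A$ by the same localisation argument (far-away $a$ incur an unbounded penalty $d(x,a)/d(x,A)-1$, so only $a$ near $a^{*}$ matter), and get continuity on $X\setminus A$ from a local Lipschitz estimate, gluing the two cases exactly as in the paper's proof of Theorem \ref{theorem-special-fn-Tietze-v8:1}. The only cosmetic differences are that you argue with sequences rather than $\varepsilon$--$\delta$, and that off $A$ you bound the infimand family directly instead of factoring $f$ through the Pasch--Hausdorff Lipschitz function as in Proposition \ref{proposition-UC-Extensions-v19:1}.
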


Tietze gave two proofs of Theorem
\ref{theorem-special-fn-Tietze-v3:1}, the second of which was based on
the following explicit construction of the extension. For a nonempty
closed set $A\subset X$ and a continuous function
$\varphi:A\to [1,2]$, he defined a continuous extension $f:X\to \R$ of
$\varphi$ by
\begin{equation}
  \label{eq:special-fn-Tietze:1}
  f(p)=
  \sup_{a\in A}\frac{\varphi(a)}{\left(1 +
      \left[d(a,p)\right]^2\right)^{\frac1{d(p,A)}}},\quad
  \text{whenever $p\in X\setminus A$.}
\end{equation}
Here, $d(p,A)=\inf\{d(p,a):a\in A\}$ is the \emph{distance} to the set
$A$, and it is assumed from the context that
$f\uhr A= \varphi$.\medskip

Nowadays, Theorem \ref{theorem-special-fn-Tietze-v3:1} is commonly
called \emph{Tietze's extension theorem}. The history of this theorem
is fascinating. In 1916, in his book \cite{poussin:1916}, de la
Vall\'{e}e Poussin gave a proof of this theorem for $X=\R^n$.  The
book of Carath\'{e}odory \cite{caratheodory:1918} contains another
proof of Tietze's extension theorem for Euclidean spaces, it was
credited to H. Bohr and works for general metric spaces, see Section
\ref{sec:bohrs-extens-constr}. In 1918, Brouwer \cite{MR1511921} also
gave an alternative proof of Tietze's extension theorem for Euclidean
spaces. In 1919, Hausdorff \cite{hausdorff:19} gave a simple proof of
Tietze's extension theorem. For a nonempty closed set $A\subset X$ and
a continuous function $\varphi:A\to [0,1]$, he defined a continuous
extension $f:X\to \R$ of $\varphi$ by
\begin{equation}
  \label{eq:special-fn-Tietze-v3:1}
  f(p)= 
  \inf_{a\in A}\left[\varphi(a)+\frac{d(a,p)}{d(p,A)}-1\right],\quad
  \text{whenever $p\in X\setminus A$.}
\end{equation} 
In 1923, in his book \cite{kerekjarto:23}, Ker\'ekj\'art\'o refers to
a letter of Riesz which contains a simple proof of Tietze's extension
theorem, and presented this proof. The same proof was also presented
in the book of Alexandroff and Hopf \cite{61.0602.07} and credited
again to Riesz. In Riesz's construction, $\varphi:A\to [1,2]$ and the
extension $f:X\to \R$ is given by the following very simple formula:
\begin{equation}
  \label{eq:special-fn-Tietze-v3:2}
  f(p)=\sup_{a\in A} \varphi(a)\cdot\frac{d(p,A)}{d(a,p)},\quad
  \text{whenever $p\in X\setminus A$.}
\end{equation}
In his book \cite{MR0120319}, Dieudonn\'e included a proof of Tietze's
extension theorem which is virtually the same as previously given by
Riesz. In the same setting, he defined an extension $f:X\to \R$ of
$\varphi$ by
\begin{equation}
  \label{eq:special-fn-Tietze-v3:3}
  f(p)=
  \inf_{a\in A}\varphi(a)\cdot\frac{d(a,p)}{d(p,A)},\quad
  \text{whenever $p\in X\setminus A$.}
\end{equation}  

In 1951, Dugundji \cite{dugundji:51} generalised Theorem
\ref{theorem-special-fn-Tietze-v3:1} by replacing the range $\R$ with
an arbitrary locally convex topological vector space using virtually
the same method as Brouwer did. This was done at the cost of applying
A.~H.\ Stone's theorem \cite[Corollary 1]{stone:48} that each
metrizable space is paracompact. On the other hand, generalising a
result of Borsuk \cite{Borsuk1933} and Kakutani \cite{Kakutani1940},
Dugundji obtained the following very interesting application. For a
space $Z$, let $C^*(Z)$ be the Banach space of all bounded continuous
functions on $Z$ equipped with the sup-norm $\|\cdot\|$.

\begin{theorem}[Dugundji, 1951]
  \label{theorem-UC-Extensions-v9:1}
  Let $A\subset X$ be a closed set in a metric space $(X,d)$. Then
  there exists a linear map $\Phi:C^*(A)\to C^*(X)$ such that
  \[
    \Phi[\varphi]\uhr A=\varphi\quad \text{and}\quad
    \left\|\Phi[\varphi]\right\|=\|\varphi\|,\quad\text{for every
      $\varphi\in C^*(A)$.}
  \]
\end{theorem}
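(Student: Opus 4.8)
The plan is to reproduce the classical Dugundji construction, taking care that it yields a genuinely \emph{linear} operator which is moreover isometric for the sup-norm. The only external ingredient is A.\,H.\ Stone's theorem, already invoked in the excerpt: since the open set $U=X\setminus A$ is metrizable, it is paracompact. First I would assemble the combinatorial data on which the operator is built, \emph{independently} of $\varphi$. For each $x\in U$ set $r_x=\tfrac12 d(x,A)>0$; the balls $\{B(x,r_x):x\in U\}$ cover $U$, so by paracompactness there is a locally finite partition of unity $\{p_j\}_{j\in J}$ subordinate to this cover. For each $j$ fix a point $x_j\in U$ with $\operatorname{supp}p_j\subset B(x_j,r_{x_j})$, and then pick $a_j\in A$ with $d(x_j,a_j)\le 2\,d(x_j,A)$. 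With these choices frozen once and for all, define
\begin{equation*}
  \Phi[\varphi](p)=
  \begin{cases}
    \varphi(p), & p\in A,\\
    \sum_{j\in J}p_j(p)\,\varphi(a_j), & p\in U.
  \end{cases}
\end{equation*}
Linearity of $\Phi$ is then immediate, because the $p_j$ and $a_j$ do not depend on $\varphi$.

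Next I would verify that $\Phi[\varphi]\in C^*(X)$ and that the sup-norm is preserved. On $U$ the formula is a locally finite sum of continuous functions, hence continuous there; and since $\sum_{j}p_j(p)=1$ with each $p_j\ge 0$, the value $\Phi[\varphi](p)$ is a convex combination of the numbers $\varphi(a_j)$, whence $|\Phi[\varphi](p)|\le\|\varphi\|$ on $U$. Combined with $\Phi[\varphi]\uhr A=\varphi$ this gives $\|\Phi[\varphi]\|\le\|\varphi\|$, while the reverse inequality is trivial from $\Phi[\varphi]\uhr A=\varphi$; hence $\|\Phi[\varphi]\|=\|\varphi\|$, and in particular $\Phi[\varphi]$ is bounded.

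The main obstacle, as always in this argument, is continuity of $\Phi[\varphi]$ at the points of $A$. I would fix $a_0\in A$ and bound $d(a_j,a_0)$ for each index $j$ with $p_j(p)>0$, where $p\in U$ lies near $a_0$. If $p\in B(x_j,r_{x_j})$ then $d(p,x_j)<\tfrac12 d(x_j,A)$, which forces $d(x_j,A)<2\,d(p,A)\le 2\,d(p,a_0)$; feeding this together with the choice $d(x_j,a_j)\le 2\,d(x_j,A)$ into the triangle inequality yields $d(a_j,a_0)\le C\,d(p,a_0)$ for an absolute constant (one checks $C=6$ works). Thus all the contributing points $a_j$ cluster near $a_0$ as $p\to a_0$, so by continuity of $\varphi$ at $a_0$ the convex combination $\sum_j p_j(p)\,\varphi(a_j)$ stays within $\varepsilon$ of $\varphi(a_0)$ once $d(p,a_0)$ is small enough. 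This is precisely the delicate point where the shrinking radii $r_x=\tfrac12 d(x,A)$ are essential, and it is what makes the whole scheme cohere.
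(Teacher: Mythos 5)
Your proposal is correct and is exactly the classical Dugundji construction that the paper invokes here without reproving it (the paper only cites Dugundji's 1951 argument via A.~H.\ Stone's paracompactness theorem): partition of unity subordinate to the balls $B(x,\tfrac12 d(x,A))$, near-best points $a_j\in A$, and the convex-combination formula, with the key estimate $d(a_j,a_0)\le 6\,d(p,a_0)$ giving continuity at the points of $A$. All the stated inequalities check out, so there is nothing to fix.
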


Regarding Theorem \ref{theorem-UC-Extensions-v9:1}, let us explicitly
remark that all extension constructions mentioned above are actually
``simultaneous'' extensions from some subset of $C^*(A)$ to $C^*(X)$,
see the next sections.\medskip

Extensions of uniformly continuous maps came naturally in the context
of the completion problem of metric spaces, see Hausdorff's 1914
monograph ``Grundz\"uge der Mengenlehre'' \cite{hausdorff:14}.  The
development of such extensions was gradual, the interested reader is
referred to \cite{Bentley_1998,Husek2010a} for an interesting outline
on the history of this extension problem. One of the first explicit
solutions of the extension problem for uniformly continuous functions
was obtained by McShane \cite[Corollary 2]{MR1562984}.

\begin{theorem}[McShane, 1934]
  \label{theorem-Uniform_cont-Ext-v1:1}
  Let $(X,d)$ be a metric space and ${A\subset X}$. Then each
  uniformly continuous bounded function $\varphi:A\to \R$ can be
  extended to the whole of $X$ preserving the uniform continuity and
  the bounds.
\end{theorem}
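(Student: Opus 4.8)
The plan is to prove McShane's theorem by an explicit infimal-convolution (Lipschitz-regularization) construction, which is the natural uniform analogue of the Hausdorff formula~\eqref{eq:special-fn-Tietze-v3:1} appearing earlier. I would begin with the model case in which $\varphi\colon A\to\R$ is not merely uniformly continuous but Lipschitz, say with Lipschitz constant $L$. In that case the formula
\[
  f(p)=\inf_{a\in A}\bigl[\varphi(a)+L\cdot d(a,p)\bigr],\quad p\in X,
\]
does everything at once. First I would check that $f$ is well defined and finite: since $\varphi$ is bounded, the infimum is bounded below, and taking any fixed $a_0\in A$ shows it is finite. Next, for $p\in A$ the inequality $\varphi(a)+L\,d(a,p)\ge\varphi(p)$ (which is exactly the Lipschitz condition) together with the choice $a=p$ gives $f(p)=\varphi(p)$, so $f$ extends $\varphi$. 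The bound is automatic: $f(p)\le\varphi(a_0)+L\,d(a_0,p)$ need not stay in the original range by itself, so I would instead truncate at the end, or observe directly that $\inf_A\varphi\le f(p)$ from the lower Lipschitz estimate and $f(p)\le\sup_A\varphi$ fails in general — hence the clean statement will require composing with the truncation $t\mapsto\max\{\inf_A\varphi,\min\{t,\sup_A\varphi\}\}$, which preserves uniform continuity and the bounds.

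The crucial structural point is that $f$ itself is Lipschitz with the \emph{same} constant $L$ on all of $X$. This follows from a standard stability property of infima: for $p,q\in X$ and any $a\in A$,
\[
  \varphi(a)+L\,d(a,p)\le \varphi(a)+L\,d(a,q)+L\,d(p,q),
\]
so taking the infimum over $a$ yields $f(p)\le f(q)+L\,d(p,q)$, and by symmetry $|f(p)-f(q)|\le L\,d(p,q)$. Thus the Lipschitz case is completely settled by one formula, and the resulting extension is global, same-modulus, and (after truncation) range-preserving.

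To pass from the Lipschitz case to the general uniformly continuous case, I would exploit the fact that a bounded uniformly continuous function on a metric space has a concave, nondecreasing, subadditive modulus of continuity $\omega$ with $\omega(0^+)=0$, and that such a modulus can be dominated by (or replaced with) a modulus that is itself Lipschitz-like enough to make the analogous infimal convolution work. Concretely, I would set
\[
  f(p)=\inf_{a\in A}\bigl[\varphi(a)+\omega\bigl(d(a,p)\bigr)\bigr],
\]
where $\omega$ is a concave majorant of the modulus of continuity of $\varphi$. Concavity and subadditivity of $\omega$ are exactly what make the two key computations above go through verbatim: the restriction identity $f\uhr A=\varphi$ uses $\varphi(a)+\omega(d(a,p))\ge\varphi(p)$, and the uniform-continuity estimate uses $\omega(d(a,p))\le\omega(d(a,q))+\omega(d(p,q))$ to obtain $|f(p)-f(q)|\le\omega(d(p,q))$, giving $f$ the same modulus $\omega$. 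After truncating into $[\inf_A\varphi,\sup_A\varphi]$ I obtain the bounded, uniformly continuous, bound-preserving extension claimed.

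The main obstacle I anticipate is purely in the modulus step: one must ensure that a genuine concave, subadditive majorant $\omega$ of the modulus of continuity exists and still satisfies $\omega(t)\to0$ as $t\to0^+$, since the raw modulus $\omega_\varphi(t)=\sup\{|\varphi(a)-\varphi(a')|:d(a,a')\le t\}$ need not be concave or subadditive. The standard remedy is to replace $\omega_\varphi$ by its least concave majorant $\widehat\omega$, and then verify (i) $\widehat\omega$ is subadditive, (ii) $\widehat\omega(0^+)=0$, and (iii) $\widehat\omega\ge\omega_\varphi$ so that $\varphi$ still has modulus $\widehat\omega$. Once these three facts are in place, the infimal-convolution argument is entirely formal, so I expect the analytic heart of the proof to be the construction and estimation of $\widehat\omega$, with everything downstream being the two short, parallel computations indicated above.
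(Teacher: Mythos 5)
Your argument is correct, but it is not the route the paper takes: it is essentially McShane's original proof, which the paper explicitly declines to reproduce (``His construction was implicit and based on moduli of continuity\dots''). The paper instead obtains Theorem \ref{theorem-Uniform_cont-Ext-v1:1} as a corollary of its general Theorem \ref{theorem-special-fn-v2:1} applied to the Riesz extender $\mathbf{R}(s,t)=t/s$ of \eqref{eq:special-fn-Tietze-v3:4}: after translating $\varphi$ into a range $[\lambda,\mu]$ with $\lambda>0$, the sup-formula $\sup_{a\in A}\varphi(a)\,d(p,A)/d(a,p)$ of \eqref{eq:special-fn-Tietze-v3:2} is a uniformly continuous extension that lands in $[\lambda,\mu]$ automatically (Example \ref{example-UC-Extensions-v7:1}), so no truncation is needed, and one gets in addition a sublinear isotone isometric extension operator on the positive cone. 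Your infimal-convolution construction $f(p)=\inf_{a\in A}[\varphi(a)+\widehat\omega(d(a,p))]$ buys something different: it is elementary and self-contained, it works for arbitrary (not necessarily closed) nonempty $A$, and it transfers the modulus quantitatively --- $f$ has the \emph{same} concave modulus $\widehat\omega$ as $\varphi$, with the Lipschitz case as the cleanest instance; the price is the final truncation into $[\inf_A\varphi,\sup_A\varphi]$, which the paper itself notes is exactly how McShane proceeded (``first extended $\varphi$\dots, and next adjusted its bounds'').

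The one step you flag as the ``analytic heart'' does go through and is worth spelling out, since the raw modulus $\omega_\varphi$ is indeed neither concave nor subadditive on a general metric space. Because $\varphi$ is bounded, $\omega_\varphi\le M:=\sup_A\varphi-\inf_A\varphi$, so the least concave majorant $\widehat\omega$ on $[0,+\infty)$ is finite; given $\varepsilon>0$ choose $\delta>0$ with $\omega_\varphi(\delta)<\varepsilon$ and observe that the affine function $t\mapsto\varepsilon+Mt/\delta$ is a concave majorant of $\omega_\varphi$, whence $\widehat\omega(t)\le\varepsilon+Mt/\delta$ and $\widehat\omega(0^+)=0$; finally, concavity together with $\widehat\omega(0)\ge0$ yields subadditivity. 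With these three facts your two parallel computations (restriction identity and modulus estimate) are complete, so the proposal stands as a valid, genuinely different proof.
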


McShane obtained the above result as an application of his Lipschitz
extension theorem \cite[Theorem 1]{MR1562984}, see also Whitney
\cite[the footnote on p.\ 63]{MR1501735}. His construction was
implicit and based on moduli of continuity of bounded uniformly
continuous functions. Regarding the relationship with Lipschitz
extensions, let us explicitly remark that, recently, G. Beer
\cite{zbMATH07212343} refined McShane's construction to show that a
bounded uniformly continuous function on $A$ is actually Lipschitz
with respect to a metric on $X$ which is uniformly equivalent to
$d$.\medskip

In 1990, Mandelkern \cite{MR1076066} gave the following explicit
construction of the extension in Theorem
\ref{theorem-Uniform_cont-Ext-v1:1}, thus relating it to Tietze's
extension theorem.

\begin{theorem}[Mandelkern, 1990]
  \label{theorem-Mandelkern:1}
  Let $(X,d)$ be a metric space, $A\subset X$ be a nonempty closed set
  and $\varphi:A\to [1,2]$ be uniformly continuous. Then the extension
  $f:X\to \R$ defined as in \eqref{eq:special-fn-Tietze-v3:3} is also
  uniformly continuous.
\end{theorem}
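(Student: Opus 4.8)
The plan is to use that the extension $f$ from \eqref{eq:special-fn-Tietze-v3:3} takes values in $[1,2]$ (each factor $\varphi(a)\,d(a,p)/d(p,A)\ge 1$ since $d(a,p)\ge d(p,A)$, while points $a$ with $d(a,p)$ arbitrarily close to $d(p,A)$ push the infimum below $2$) and then to treat points far from $A$ and points near $A$ by completely different estimates. Writing $f(p)=g(p)/d(p,A)$ with $g(p)=\inf_{a\in A}\varphi(a)\,d(a,p)$, one sees that $g$ is $2$-Lipschitz (an infimum of the $2$-Lipschitz maps $p\mapsto\varphi(a)\,d(a,p)$) while $d(\cdot,A)$ is $1$-Lipschitz; hence on any region where $d(\cdot,A)$ stays bounded below, $f$ is Lipschitz and uniform continuity is automatic. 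The whole difficulty is the behaviour of $f$ as $d(p,A)\to0$, where this quotient estimate degrades like $1/d(p,A)$, and it is precisely there that the uniform continuity of $\varphi$ must be used.

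For the far regime I would estimate $f(p)-f(q)$ directly, assuming $f(p)\ge f(q)$. For arbitrary $\epsilon>0$ pick $a\in A$ with $\varphi(a)\,d(a,q)/d(q,A)\le f(q)+\epsilon$; since $\varphi\ge1$ this forces $d(a,q)\le(2+\epsilon)\,d(q,A)$. Comparing the two quotients through this common point $a$, putting their difference over the denominator $d(p,A)\,d(q,A)$ and using the Lipschitz bounds $|d(a,p)-d(a,q)|\le d(p,q)$ and $|d(p,A)-d(q,A)|\le d(p,q)$, one obtains $\big|d(a,p)/d(p,A)-d(a,q)/d(q,A)\big|\le(3+\epsilon)\,d(p,q)/d(p,A)$. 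Multiplying by $\varphi(a)\le2$, letting $\epsilon\to0$, and symmetrising in $p,q$ yields
\[
|f(p)-f(q)|\le\frac{6\,d(p,q)}{\min\{d(p,A),d(q,A)\}}.
\]

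For the near regime, which is the crux, I would prove the following. Suppose $|\varphi(a)-\varphi(a')|<\varepsilon$ whenever $d(a,a')<\delta$. Then for every $p$ with $4\,d(p,A)<\delta$ and every near-projection $a_0$ (meaning $d(a_0,p)<(1+\varepsilon)\,d(p,A)$ with $\varepsilon<1$) one has $|f(p)-\varphi(a_0)|\le2\varepsilon$. The upper bound is immediate, since $f(p)\le\varphi(a_0)\,d(a_0,p)/d(p,A)<\varphi(a_0)(1+\varepsilon)\le\varphi(a_0)+2\varepsilon$. For the lower bound I split the infimum: points $a$ with $d(a,p)\ge2\,d(p,A)$ contribute $\varphi(a)\,d(a,p)/d(p,A)\ge2\ge\varphi(a_0)$, whereas points $a$ with $d(a,p)<2\,d(p,A)$ satisfy $d(a,a_0)<4\,d(p,A)<\delta$, so that $\varphi(a)>\varphi(a_0)-\varepsilon$ and (using $d(a,p)\ge d(p,A)$) their contribution is $\ge\varphi(a_0)-\varepsilon$; hence $f(p)\ge\varphi(a_0)-\varepsilon$. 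This step — controlling which $a\in A$ can nearly realise the infimum and showing they all lie in a $\delta$-ball about the projection — is exactly where uniform continuity enters, and I expect it to be the main obstacle.

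It remains to assemble the two estimates. Given $\varepsilon_0>0$ (and $\varepsilon_0<1$, since the values lie in $[1,2]$), set $\varepsilon=\varepsilon_0/5$, take $\delta$ from the uniform continuity of $\varphi$, and put $\rho_0=\delta/16$ and $\eta=\varepsilon_0\rho_0/6<\rho_0$; assume $d(p,q)<\eta$. If $d(p,A)\ge\rho_0$ and $d(q,A)\ge\rho_0$, the far estimate gives $|f(p)-f(q)|\le6\eta/\rho_0=\varepsilon_0$. Otherwise both $p$ and $q$ lie within $2\rho_0$ of $A$ (a point lying in $A$ being handled by taking its own projection), so $4\,d(p,A)<\delta$ and $4\,d(q,A)<\delta$ and the near estimate applies to each; choosing near-projections $a_p,a_q$ one checks $d(a_p,a_q)<9\rho_0<\delta$, whence
\[
|f(p)-f(q)|\le|f(p)-\varphi(a_p)|+|\varphi(a_p)-\varphi(a_q)|+|\varphi(a_q)-f(q)|\le2\varepsilon+\varepsilon+2\varepsilon=\varepsilon_0,
\]
which establishes the uniform continuity of $f$.
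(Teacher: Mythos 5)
Your argument is correct, and it reaches the conclusion by a genuinely more direct route than the paper does. The paper never proves Mandelkern's theorem head-on: in Example \ref{example-UC-Extensions-v6:2} it observes that the infimum formula \eqref{eq:special-fn-Tietze-v3:3} satisfies $\mho_{\mathbf{D}}[\varphi]=1/\Omega_{\mathbf{R}}[1/\varphi]$ (the identity \eqref{eq:UC-Extensions-v11:2} with the Riesz function \eqref{eq:special-fn-Tietze-v3:4}), and then invokes the general Tietze-extender Theorem \ref{theorem-special-fn-v2:1}, whose proof is organised around Lemma \ref{lemma-special-fn-Tietze-v5:1} (behaviour at the points of $A$), Proposition \ref{proposition-special-fn-Tietze-v6:2} (behaviour on $X\setminus\mathbf{O}(A,\tau)$) and the gluing Proposition \ref{proposition-Mandelkern:1}. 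Your near/far decomposition is the same underlying philosophy, but you work directly with the infimum and with the specific bounds $1\leq\varphi\leq 2$: the observation that any $a$ with $d(a,p)\geq 2\,d(p,A)$ already contributes at least $2$ to the infimum, so that only the $a$'s in a $\delta$-ball around a near-projection matter, is exactly the role that conditions \eqref{eq:special-fn-Tietze-v6:1}--\eqref{eq:special-fn-Tietze-v6:2} play in Lemma \ref{lemma-special-fn-Tietze-v5:1}, and your quotient estimate in the far regime is the concrete instance of Proposition \ref{proposition-special-fn-Tietze-v6:2} for $\mathbf{D}(s,t)=s/t$. What your version buys is a self-contained, fully quantitative proof with an explicit modulus of continuity for $f$ in terms of that of $\varphi$ (indeed a Lipschitz bound off each neighbourhood of $A$); what the paper's version buys is generality --- one theorem covering Tietze's, Riesz's and Dieudonn\'e's constructions simultaneously, together with the operator-theoretic conclusions (sublinearity, isotonicity, isometry) --- at the cost of the detour through $1/\varphi$. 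Two cosmetic points only: in the assembly your Case 1 bound comes out as $\leq\varepsilon_0$ rather than $<\varepsilon_0$, which is harmless, and you should state explicitly that the near-regime lemma presupposes $p\notin A$ (you do handle $p\in A$ parenthetically, which suffices).
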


In his review of Mandelkern's paper \cite{MR1076066}, St.\ Watson
\cite{Watson-1990-MR} remarked that Hausdorff's formula
\eqref{eq:special-fn-Tietze-v3:1} is simpler and even included in
Engelking's General Topology in \cite[Exercise 4.1.F]{engelking:89},
and questioned whether it also preserves uniform continuity. He
further remarked that the study of the mechanics of Tietze's extension
theorem is worthwhile and proposed the following general formula of
constructing an extension. He wrote that we can measure how close
$a\in A$ is to $p\in X\setminus A$ by the function
$s(a,p) = \frac{d(a,p)}{d(p, A)}$. Then he suggested to consider a
general function $c(s,r)$ representing the extension by
$f(p)=\inf_{a\in A}c(s(a,p),\varphi(a))$. For instance, in Hausdorff's
approach, $c(s,r)= r+s-1$. Thus, he posed the following general
question.

\begin{question}
  \label{question-UC-Extensions-v6:1}
  Can those functions $c(s,r)$ which preserve continuity and uniform
  continuity, respectively, be characterised?
\end{question}

We are now ready to state also the main purpose of this paper. In the
next section, we show that Hausdorff's extension construction
\eqref{eq:special-fn-Tietze-v3:1} preserves the uniformly continuous
functions (Theorem \ref{theorem-special-fn-Tietze-v8:1}). In fact, we
show that this construction defines an extension operator
$\Psi:C^*(A)\to C^*(X)$ which is an isometry and
$\Psi[\varphi]:X\to \R$ is uniformly continuous whenever
$\varphi\in C^*(A)$ is. Regarding this, let us explicitly remark that
$\Psi$ is not linear. In fact, as shown in the Remark after Corollary
D in Pe{\l}czy\'nski \cite[Notes and Remarks]{pelczynski:68}, see also
Lindenstrauss \cite{Lindenstrauss1964}, one cannot expect a linear
extension operator from $C^*(A)$ to $C^*(X)$ to preserve uniform
continuity. In Section \ref{sec:tietze-extenders}, we deal with an
alternative setting of Question~\ref{question-UC-Extensions-v6:1}. In
contrast to Watson's approach, we consider and abstract function
$\mathbf{F}(s,t)$ of the variables $s\geq t>0$, where $s$ plays the
role of $d(a,p)$ and $t$ --- that of $d(p,A)$, for $a\in A$ and
$p\in X\setminus A$. Then we associate the function
$\mathbf{F}^*:A\times (X\setminus A)\to \R$ defined by
$\mathbf{F}^*(a,p)=\mathbf{F}(d(a,p), d(p,A))$, $a\in A$ and
$p\in X\setminus A$. The extension of a bounded function
$\varphi:A\to [0,+\infty)$ is now defined by taking supremum or
infimum on $A$ of the product ``$\varphi(a)\cdot
\mathbf{F}^*(a,p)$''. By considering the multiplicative inverse of
$\mathbf{F}$, if necessary, the construction is reduced only to taking
supremum. In this case, we require $\mathbf{F}$ to take values in
$(0,1]$ and place three conditions on it, see
\eqref{eq:special-fn-Tietze-v6:1}, \eqref{eq:special-fn-Tietze-v6:2}
and \eqref{eq:special-fn-Tietze-v6:3}, which are satisfied by all
classical constructions of this type. In this general setting, we show
that
$\Omega_\mathbf{F}[\varphi](p)=\sup_{a\in A}\varphi(a)\cdot
\mathbf{F}^*(a,p)$, $p\in X\setminus A$, transforms each member
$\varphi\in C_+^*(A)$ of the positive cone of $C^*(A)$ into a member
$\Omega_\mathbf{F}[\varphi]\in C_+^*(X)$ which is uniformly continuous
whenever $\varphi$ is, Theorem \ref{theorem-special-fn-v2:1}. In fact,
this defines a sublinear extension operator
$\Omega_\mathbf{F}:C^*_+(A)\to C^*_+(X)$ which is an isotone
isometry. Based on a classical construction, we extend
$\Omega_\mathbf{F}$ to a positively homogeneous extension operator
$\Theta_\mathbf{F}:C^*(A)\to C^*(X)$ which is $2$-Lipschitz and still
preserves uniform continuity, Theorem
\ref{theorem-UC-Extensions-v22:1}.  Section \ref{sec:tietze-extenders}
also contains several supporting examples and remarks.  In the last
Section \ref{sec:bohrs-extens-constr}, we consider Bohr's extension
construction which is somewhat different being based on an integral
formula. This construction may look a bit artificial, but simple
arguments show that it also preserves the uniformly continuous
functions (Theorem
\ref{theorem-special-fn-Tietze-v11:1}). Furthermore, this construction
gives at once a sublinear extension operator $\Phi:C^*(A)\to C^*(X)$
which is an isotone isometry and preserves uniform
continuity. Finally, let us remark that the extension operators
constructed in this paper are complementary to a result for
simultaneous extension of bounded uniformly continuous function
obtained in \cite[Theorem 3.1]{Banakh2009}. The method used in
\cite{Banakh2009} is based on the original arguments of McShane for
proving Theorem \ref{theorem-Uniform_cont-Ext-v1:1}. Our extension
operators can be also compared with a nonlinear extension operator
constructed by Borsuk in \cite{borsuk:36}.

\section{Hausdorff's Extension Operator}
\label{sec:extens-constr-haud}

An \emph{ordered vector space} is a real vector space $E$ endowed with
a partial order $\leq$ such that for every $u, v \in E$ with
$u\leq v$,
\begin{enumerate}[label=(\roman*)]
\item $u+x\leq v+x$, for every $x\in E$,
\item $\alpha \cdot u\leq \alpha \cdot v$, for every $\alpha>0$.
\end{enumerate}
In such a space $E$, the set $\{v\in E: v\geq \mathbf{0}\}$ is called
the \emph{positive cone} of $E$, and denoted by $E^+$ or $E_+$.  An
ordered vector space $(E,\leq)$ is a \emph{Riesz space}, or a
\emph{vector lattice}, if it is also a lattice, i.e.\ if each pair of
elements $u,v\in E$ has a supremum $u\vee v=\sup\{u,v\}\in E$ and an
infimum $u\wedge v=\inf\{u,v\}\in E$. In a Riesz space $E$, to each
element $u\in E$ we may associate its absolute value $|u|=(-u)\vee
u$. A \emph{normed Riesz space} is a Riesz space $(E,\leq)$ endowed
with a norm $\|\cdot\|$ such that $\|u\|\leq \|v\|$, whenever
$u,v\in E$ with $|u|\leq |v|$.  A complete normed Riesz space is
called a \emph{Banach lattice}.\medskip

A map $\Phi:E\to V$ between Riesz spaces is \emph{positive} if
${\Phi(E^+) \subset V^+}$, and $\Phi$ is called \emph{isotone}, or
\emph{order-preserving}, if $\Phi(u)\leq \Phi(v)$ whenever $u,v\in E$
with $u\leq v$. Each linear positive operator between Riesz spaces is
isotone because for elements $u, v\in E$ we have that $u\leq v$
precisely when $v-u\in E^+$.\medskip

Let $C^*(Z)$ be the Banach space of all bounded, continuous,
real-valued functions on a space $Z$ equipped with the sup-norm
$\|f\|=\sup|f|$, $f\in C^*(Z)$. The vector space $C^*(Z)$ is
partially ordered by defining that $f\leq g$ holds whenever
$f(z) \leq g(z)$ for all $z\in Z$. This makes $C^*(Z)$ a Banach
lattice. Namely, for $f,g\in C^*(Z)$, the elements $f\vee g$ and
$f\wedge g$ are the pointwise maximum and minimum of $f$ and $g$. The
positive cone of $C^*(Z)$ will be denoted by $C^*_+(Z)$; and for
$t\in\R$, we will use $t_Z$ for the constant function $t_Z:Z\to\{t\}$.
In case $(Z,\rho)$ is a metric space, we will also use $C^*_u(Z)$ for
the uniformly continuous members of $C^*(Z)$.\medskip

In this section and the rest of the paper, $(X,d)$ is a fixed metric
space and $A\subset X$ is a nonempty closed set.  A map
$\Phi:C^*(A)\to C^*(X)$ is called an \emph{extension operator} if
$\Phi[\varphi]\uhr A=\varphi$, for every $\varphi\in C^*(A)$. A linear
map $\Phi:C^*(A)\to C^*(X)$ is called \emph{regular} if
$\Phi[1_A]=1_X$ and $\|\Phi[\varphi]\|=\|\varphi\|$,
$\varphi\in C^*(A)$, see \cite{pelczynski:68}. Evidently, a regular
linear map $\Phi:C^*(A)\to C^*(X)$ is an isometry. Furthermore,
according to \cite[Proposition 1.2]{pelczynski:68}, a linear operator
$\Phi:C^*(A)\to C^*(X)$ is regular if and only if $\Phi$ is positive
and $\Phi[1_A]=1_X$. In particular, each regular linear operator
$\Phi:C^*(A)\to C^*(X)$ is also isotone.\medskip

In what follows, we shall say that a map $\Psi:C^*(A)\to C^*(X)$
\emph{preserves uniform continuity} if
$\Psi[C^*_u(A)]\subset C^*_u(X)$. Also, for convenience, we shall say
that $\Psi$ \emph{preserves the constants} if $\Psi[t_A]=t_X$ for
every $t\in \R$. In this section, we consider Hausdorff's extension
construction \eqref{eq:special-fn-Tietze-v3:1} as an extension
operator from $C^*(A)$ to $C^*(X)$. While this operator is not linear,
it preserves uniform continuity and some of the properties of regular
linear operators.

\begin{theorem}
  \label{theorem-special-fn-Tietze-v8:1}
  For each $\varphi\in C^*(A)$, let $\Psi[\varphi]:X\to \R$ be the
  extension of $\varphi$ defined as in
  \eqref{eq:special-fn-Tietze-v3:1}, i.e.\ by
  \begin{equation}
    \label{eq:UC-Extensions-v22:1}
    \Psi[\varphi](p)= \inf_{a\in
      A}\left[\varphi(a)+\frac{d(a,p)}{d(p,A)}-1\right],\quad
    \text{whenever $p\in X\setminus A$.}
  \end{equation}
  Then $\Psi:C^*(A)\to C^*(X)$ is an extension operator which is an
  isotone isometry. Moreover, $\Psi$ preserves both uniform continuity
  and the constants.
\end{theorem}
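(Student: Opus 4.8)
The plan is to dispatch the algebraic and order properties directly and then isolate a single localisation estimate that drives both the continuity and the preservation of uniform continuity. Throughout write $h=\Psi[\varphi]$ and $t_p=d(p,A)$, and recall that $d(a,p)\ge t_p$ for every $a\in A$ and $p\in X\setminus A$, so each bracket in \eqref{eq:UC-Extensions-v22:1} is at least $\varphi(a)\ge-\|\varphi\|$, giving $h(p)\ge-\|\varphi\|$. Choosing $a$ with $d(a,p)<(1+\eta)t_p$ and letting $\eta\to0$ yields $h(p)\le\|\varphi\|$ as well, so $\|h\|\le\|\varphi\|$; since $h\uhr A=\varphi$ by construction, the reverse inequality is trivial and $\Psi$ is an isometry. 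Isotonicity is immediate because the bracket is monotone in $\varphi(a)$ for fixed $a,p$, and the one-line computation $\inf_{a}\bigl[t+\tfrac{d(a,p)}{t_p}-1\bigr]=t-1+\tfrac{d(p,A)}{t_p}=t$ shows $\Psi[t_A]=t_X$, so $\Psi$ preserves the constants. It remains to prove that $h$ is continuous (so that $\Psi$ indeed lands in $C^*(X)$) and uniformly continuous whenever $\varphi$ is.

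Both points rest on a \emph{localisation}: only points $a$ within distance $K\,t_p$ of $p$ affect the infimum, where $K=2\|\varphi\|+2$. Indeed, if $d(a,p)>K t_p$ the bracket exceeds $\|\varphi\|\ge\varphi(a_p)$ for any approximate foot $a_p$ of $p$, while for $d(a,p)\le K t_p$ the bound $d(a,a_p)\le(K+2)t_p$ together with a nondecreasing modulus of continuity $\omega$ of $\varphi$ makes the bracket at least $\varphi(a_p)-\omega((K+2)t_p)$. Comparing with the upper bound $h(p)\le\varphi(a_p)+\eta$ from the test point $a_p$, I obtain, for a suitable $a_p\in A$ with $d(a_p,p)\le(1+\eta)t_p$,
\[
  |h(p)-\varphi(a_p)|\le\max\!\bigl(\eta,\ \omega((K+2)t_p)\bigr).
\]
This foot estimate ties $h(p)$ to the value of $\varphi$ at the foot of $p$ and controls everything near $A$.

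Away from $A$ I would use the standard swap: fixing an almost-minimiser $a_q$ for $q$, the localisation bounds $d(a_q,q)\le K t_q$, and feeding $a_q$ into the infimum defining $h(p)$, with $d(a_q,p)\le d(a_q,q)+d(p,q)$ and $|t_p-t_q|\le d(p,q)$, gives after letting the slack tend to $0$
\[
  |h(p)-h(q)|\le\frac{(K+1)\,d(p,q)}{\min(t_p,t_q)}.
\]
This is a local Lipschitz bound on $X\setminus A$, so $h$ is continuous there; continuity at a point $a^*\in A$ follows from the foot estimate, since as $p\to a^*$ one has $t_p\to0$ and every relevant $a$ satisfies $d(a,a^*)\to0$, whence $\varphi(a_p)\to\varphi(a^*)$ by continuity of $\varphi$. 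Thus $\Psi$ is an extension operator into $C^*(X)$.

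For preservation of uniform continuity I would split according to $\min(t_p,t_q)$ against a threshold $\tau$. Given $\epsilon>0$, uniform continuity of $\varphi$ lets me pick $\tau$ so small that $2\omega((K+2)\tau)+\omega(5\tau)<\epsilon$, and then $\delta<\tau/2$ so small that $\tfrac{2(K+1)\delta}{\tau}<\epsilon$. If $\min(t_p,t_q)\ge\tau/2$ the Lipschitz bound gives $|h(p)-h(q)|<\epsilon$; if $\min(t_p,t_q)<\tau/2$ then $|t_p-t_q|\le d(p,q)<\tau/2$ forces both $t_p,t_q<\tau$, and the foot estimate applied to $p$ and $q$ (reading $a_p=p$ when $p\in A$), with $d(a_p,a_q)\le(1+\eta)t_p+d(p,q)+(1+\eta)t_q\le5\tau$, yields $|h(p)-h(q)|\le2\max(\eta,\omega((K+2)\tau))+\omega(5\tau)<\epsilon$ after sending $\eta\to0$. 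I expect the main obstacle to be exactly this localisation step and the bookkeeping that lets one threshold $\tau$ reconcile the near-$A$ regime, where boundedness is useless and only the modulus of continuity of $\varphi$ helps, with the away-from-$A$ regime, where the $1/\min(t_p,t_q)$ blow-up must be absorbed; the case $p\in A$ is folded into the near regime by reading $p$ as its own foot.
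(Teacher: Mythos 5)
Your argument has one genuine gap: the isometry. You prove only that $\|\Psi[\varphi]\|=\|\varphi\|$, i.e.\ that $\Psi$ preserves the norm, and then conclude that ``$\Psi$ is an isometry.'' For a \emph{linear} operator these are equivalent, but $\Psi$ is not linear (Remark \ref{remark-UC-Extensions-v22:1} gives an explicit counterexample), and the claim being made is the metric one: $\|\Psi[\varphi]-\Psi[\psi]\|=\|\varphi-\psi\|$ for all $\varphi,\psi\in C^*(A)$. Norm preservation of a nonlinear map does not imply this. The missing step is short but must be supplied: for $p\in X\setminus A$ the two infima defining $\Psi[\varphi](p)$ and $\Psi[\psi](p)$ run over brackets whose pointwise difference is $|\varphi(a)-\psi(a)|$, so the elementary inequality $|\inf g-\inf h|\leq \sup|g-h|$ (this is \eqref{eq:special-fn-Tietze-v9:2} in the paper) gives $|\Psi[\varphi](p)-\Psi[\psi](p)|\leq\|\varphi-\psi\|$, while restriction to $A$ gives the reverse inequality. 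As written, the isometry claim is unproved.

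The rest of your proof is correct and takes a genuinely different route from the paper's. For the behaviour away from $A$, the paper factors $\Psi[\varphi](x)=\frac{f(x)}{d(x,A)}-1$ with $f$ the Pasch--Hausdorff-type function of Proposition \ref{proposition-UC-Extensions-v19:1}, which is globally Lipschitz, and then multiplies by the Lipschitz function $\frac1{d(\cdot,A)}$ on $X\setminus\mathbf{O}(A,\tau)$; you instead localise the infimum to the ball of radius $K t_p$ about $p$ and run a direct almost-minimiser swap, obtaining the explicit bound $|h(p)-h(q)|\leq (K+1)d(p,q)/\min(t_p,t_q)$. Near $A$, the paper localises around points $p\in A$ (the $\mathbf{O}_A(p,4\kappa\delta)$ ball after translating $\varphi$ to be nonnegative), whereas your foot estimate localises around points of $X\setminus A$ and is quantitative in the modulus of continuity; and you inline the gluing that the paper delegates to Proposition \ref{proposition-Mandelkern:1}. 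Your version buys explicit constants and makes the uniform-continuity bookkeeping transparent; the paper's buys brevity by reusing two self-contained lemmas. One presentational caution: your foot estimate is phrased via a global modulus $\omega$ with $\omega(0+)=0$, which presupposes uniform continuity; for mere continuity you do fall back on a pointwise argument at each $a^*\in A$ (using that every $a$ relevant to the infimum satisfies $d(a,a^*)\leq (K+1)d(p,a^*)$), but that should be stated as a separate local estimate rather than through $\omega$.
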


The proof of Theorem \ref{theorem-special-fn-Tietze-v8:1} is based on
two general observations, the first of which is related to another
construction of Hausdorff about Lipschitz functions. Let us recall
that a map $f:X\to Y$ in a metric space $(Y,\rho)$ is \emph{Lipschitz}
if there exists $K\geq 0$ such that $\rho(f(p),f(q))\leq K d(p,q)$,
for all $p,q\in X$.  In this case, to emphasise on the constant $K$,
we also say that $f$ is \emph{$K$-Lipschitz}.\medskip

In his 1919 paper \cite{hausdorff:19}, Hausdorff described a very
interesting construction of Lipschitz functions. Namely, to each
bounded function $\varphi:X\to \R$ and $\kappa>0$, he associated the
function $f_\kappa:X\to \R$ defined by
\[
  f_\kappa(p)=\inf_{x\in X}\left[\varphi(x)+ \kappa
    d(x,p)\right],\quad p\in X.
\]
He showed that $f_\kappa$ is $\kappa$-Lipschitz, see \cite[page
293]{hausdorff:19}, and credited the construction to Moritz Pasch. In
the literature, the functions $f_\kappa$, $\kappa>0$, are often called
the \emph{Pasch-Hausdorff envelope} of $\varphi$; the function
$f_\kappa$ is also called the \emph{$\kappa$-Lipschitz regularisation}
of $\varphi$. It should be remarked that $f_\kappa$ can be defined for
each function $\varphi:X\to \R$ for which there exists a
$\kappa$-Lipschitz function $f:X\to \R$ with $f\leq \varphi$. In fact,
one can easily see that $f_\kappa$ is the greatest $\kappa$-Lipschitz
function with $f_\kappa \leq \varphi$.\medskip

For an extended discussion on the Pasch-Hausdorff construction, the
interested reader is referred to \cite{Gutev2020}. In the proof of
Theorem \ref{theorem-special-fn-Tietze-v8:1}, we will use the
following slight modification of this construction.

\begin{proposition}
  \label{proposition-UC-Extensions-v19:1}
  For a bounded function $\varphi:A\to [0,+\infty)$, define a function
  $f:X\to \R$ by
  \begin{equation}
    \label{eq:UC-Extensions-v19:1}
    f(p)=\inf_{a\in A}\big[\varphi(a)
    d(p,A) + d(a,p)\big],\quad p\in X.
  \end{equation}
  Then $f$ is Lipschitz.
\end{proposition}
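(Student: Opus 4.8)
The plan is to recognize $f$ as a pointwise infimum of a family of functions that are all Lipschitz with one common constant, and then to invoke the elementary fact that such an infimum inherits that constant. The whole argument hinges on the boundedness hypothesis, which is precisely what makes the common constant available.

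First I would fix the notation: since $\varphi$ is bounded and nonnegative, set $M=\sup_{a\in A}\varphi(a)<+\infty$, and for each $a\in A$ define $g_a:X\to\R$ by $g_a(p)=\varphi(a)\,d(p,A)+d(a,p)$, so that $f=\inf_{a\in A}g_a$. The two building blocks $p\mapsto d(p,A)$ and $p\mapsto d(a,p)$ are each $1$-Lipschitz, so the triangle inequality gives, for all $p,q\in X$,
\[
|g_a(p)-g_a(q)|\leq \varphi(a)\,|d(p,A)-d(q,A)|+|d(a,p)-d(a,q)|\leq(\varphi(a)+1)\,d(p,q)\leq (M+1)\,d(p,q).
\]
The essential point is that the constant $M+1$ does not depend on $a$; this is exactly where boundedness of $\varphi$ enters.

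Next I would record that $f$ is finite-valued: each summand is nonnegative, so $f\geq 0$, while fixing any single $a_0\in A$ gives $f(p)\leq g_{a_0}(p)<+\infty$. With finiteness in hand, the final step is the standard observation that a pointwise infimum of functions sharing a common Lipschitz constant $K=M+1$ is again $K$-Lipschitz. Concretely, for $p,q\in X$ and every $a\in A$ one has $f(p)\leq g_a(p)\leq g_a(q)+K\,d(p,q)$; taking the infimum over $a$ on the right yields $f(p)\leq f(q)+K\,d(p,q)$, and interchanging $p$ and $q$ gives $|f(p)-f(q)|\leq K\,d(p,q)$. There is no real obstacle in this argument; the only points requiring care are the verification that the Lipschitz constant of $g_a$ is uniform in $a$ (forcing the appeal to boundedness) and the check that the infimum does not collapse to $-\infty$ (guaranteed by nonnegativity of the summands).
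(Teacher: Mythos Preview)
Your proof is correct and is essentially the same as the paper's: both bound $\varphi$ by some $\kappa$ (your $M$), use the $1$-Lipschitz property of $d(\cdot,A)$ and $d(a,\cdot)$ to obtain the uniform constant $\kappa+1$, and pass the constant through the infimum. The only difference is organizational---you phrase it as ``each $g_a$ is $(M+1)$-Lipschitz, hence so is $\inf_a g_a$'', whereas the paper carries out that infimum step in a single line without naming the intermediate functions.
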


\begin{proof}
  Suppose that $\varphi:A\to [0,\kappa]$ for some $\kappa>0$.  If
  $x,p\in X$ and $a\in A$, then $d(x,A)\leq d(p,A)+ d(x,p)$ and
  $d(a,x)\leq d(a,p)+ d(x,p)$. Multiplying the first inequality with
  $\varphi(a)$ and adding it to the second one, it follows from
  \eqref{eq:UC-Extensions-v19:1} that
  $f(x)\leq f(p)+ (\kappa+1)d(x,p)$ because $\varphi$ is bounded by
  $\kappa$. Accordingly, $|f(x)-f(p)|\leq (\kappa+1)d(x,p)$.
\end{proof}

For the other general observation, let us briefly review the
difference between continuity ``at a point'' and continuity ``on a
subset''. In some sources, $f:X\to \R$ is continuous on $A$ if it is
continuous at each point of $A$. In other sources, $f$ is continuous
on $A$ if its restriction $f\uhr A:A\to \R$ is continuous. To avoid
any misunderstanding, we will say that $f$ is \emph{continuous at the
  points of $A$} if it is continuous at each point of $A$. Similarly,
in most sources, $f$ is assumed to be uniformly continuous on $A$ if
its restriction $f\uhr A$ is uniformly continuous. The provision for
the other interpretation was made in \cite{Beer1991} where $f$ was
called ``uniformly continuous on $A$'' if for each $\varepsilon > 0$
there exists $\delta > 0$ such that $|f(x)- f(a)| < \varepsilon$, for
every $a\in A$ and $x\in X$ with $d(a,x) < \delta$. The terminology
was further refined in \cite[Definition 1.1]{Beer2009}, where this
property was called \emph{strongly uniformly continuous on $A$}. To
avoid any confusion with the existing literature, we will retain this
concept. However, for the proper understanding of our results, let us
explicitly remark that $f$ is strongly uniformly continuous on $A$
pre\-ci\-sely when it is continuous at the points of $A$ with the same
$\delta$ for all points of $A$.\medskip

If a function $f:X\to \R$ is continuous at the points of the closed
set $A$ and its restriction on $X\setminus A$ is also continuous, then
$f$ is itself continuous. This follows from the fact that
$X\setminus A$ is open, hence $f\uhr X\setminus A$ is continuous
precisely when $f$ is continuous at the points of $X\setminus A$. In
the case of strong uniform continuity, this property was summarised by
M. Hu\v{s}ek \cite[Corollary 16]{Husek2010a}. It is in good accord
with our method of proving Theorem
\ref{theorem-special-fn-Tietze-v8:1}, and we briefly reproduce the
arguments. To this end, for $\varepsilon>0$ we will use
$\mathbf{O}(p,\varepsilon)=\{x\in X: d(x,p)<\varepsilon\}$ to denote
the \emph{open $\varepsilon$-ball} centred at a point $p\in X$. Also,
let
$\mathbf{O}(S,\varepsilon) = \bigcup_{p\in
  S}\mathbf{O}(p,\varepsilon)$ be the $\varepsilon$-\emph{enlargement}
of $S$, whenever $S\subset X$.

\begin{proposition}
  \label{proposition-Mandelkern:1}
  If a function $f:X\to \R$ is strongly uniformly continuous on $A$
  and for each $\tau>0$, its restriction on
  $X\setminus \mathbf{O}(A,\tau)$ is uniformly continuous, then $f$ is
  itself uniformly continuous.
\end{proposition}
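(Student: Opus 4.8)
The plan is to prove the contrapositive-free direct statement by an $\varepsilon$--$\delta$ argument, splitting $X$ into the part near $A$ (where uniform continuity \emph{at the points of} $A$ is used) and the part away from $A$ (where uniform continuity of the restriction to $X\setminus\mathbf{O}(A,\tau)$ is used). Fix $\varepsilon>0$. First I would invoke the hypothesis that $f$ is uniformly continuous at the points of $A$ to obtain $\delta_0>0$ such that $|f(x)-f(a)|<\varepsilon/3$ whenever $a\in A$, $x\in X$ and $d(a,x)<\delta_0$. The idea is that the ``collar'' $\mathbf{O}(A,\tau)$ around $A$, for a suitably small $\tau$, will be controlled entirely by this single $\delta_0$: if two points both lie within distance $\tau$ of $A$ and are close to each other, each is close to some point of $A$, and the values of $f$ there are pinned down by the $\varepsilon/3$-estimate.

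Next I would set $\tau=\delta_0/2$ (or some similar fraction) and apply the second hypothesis to this $\tau$, getting $\delta_1>0$ witnessing the uniform continuity of $f\uhr (X\setminus\mathbf{O}(A,\tau))$. The proof then puts $\delta=\min\{\delta_1,\tau\}$ and considers arbitrary $p,q\in X$ with $d(p,q)<\delta$. There are two cases. If both $p$ and $q$ lie in $X\setminus\mathbf{O}(A,\tau)$, then $d(p,q)<\delta\leq\delta_1$ gives $|f(p)-f(q)|<\varepsilon$ directly. Otherwise at least one of them, say $p$, lies in $\mathbf{O}(A,\tau)$, so there is $a\in A$ with $d(a,p)<\tau$. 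Then $d(a,q)\leq d(a,p)+d(p,q)<\tau+\delta\leq 2\tau=\delta_0$ and likewise $d(a,p)<\tau<\delta_0$, so the at-the-points-of-$A$ estimate gives $|f(p)-f(a)|<\varepsilon/3$ and $|f(q)-f(a)|<\varepsilon/3$, whence $|f(p)-f(q)|<2\varepsilon/3<\varepsilon$ by the triangle inequality.

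The main point to get right is the bookkeeping of the constants so that the collar thickness $\tau$ and the proximity bound $\delta$ combine to keep every relevant point within $\delta_0$ of $A$ in the ``near'' case, while simultaneously $\delta\leq\delta_1$ handles the ``far'' case; the choice $\tau=\delta_0/2$ and $\delta=\min\{\delta_1,\tau\}$ is what makes $d(a,q)<2\tau=\delta_0$ work. I do not expect a genuine obstacle here, since once the two regimes are separated the estimates are routine triangle-inequality manipulations; the only subtlety is that the case split must be exhaustive, which it is because every $p\in X$ is either inside or outside $\mathbf{O}(A,\tau)$, and the symmetric roles of $p$ and $q$ let us assume without loss of generality that it is $p$ that falls in the collar.
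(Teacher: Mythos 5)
Your argument is correct and is essentially the same as the paper's: both split into the case where both points avoid the collar $\mathbf{O}(A,\tau)$ and the case where one point lies in it, then use the triangle inequality through a nearby point of $A$. The only differences are cosmetic bookkeeping (you budget $\varepsilon/3$ to land under $\varepsilon$, while the paper accepts a final bound of $2\varepsilon$; you fix $\delta_0$ first and set $\tau=\delta_0/2$, while the paper chooses $\tau$ so the first hypothesis works at scale $2\tau$).
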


\begin{proof}
  Let $\varepsilon>0$. Since $f$ is strongly uniformly continuous on
  $A$, there exists $\tau>0$ such that $|f(x)-f(a)|<\varepsilon$ for
  every $a\in A$ and $x\in X$ with $d(a,x)<2\tau$. For the same
  reason, since the restriction of $f$ on
  $X\setminus \mathbf{O}(A,\tau)$ is uniformly continuous, there
  exists $0<\delta\leq\tau$ such that $|f(x)-f(p)|<\varepsilon$ for
  every $x,p\notin \mathbf{O}(A,\tau)$ with $d(x,p)<\delta$. To show
  that this $\delta$ works, take points $x,p\in X$ with
  $d(x,p)<\delta$. If $x,p\notin \mathbf{O}(A,\tau)$, then
  $|f(x)-f(p)|<\varepsilon$. If $d(x,A)<\tau$ or $d(p,A)<\tau$, there
  exists a point $a\in A$ with $d(x,a)<2\tau$ and $d(p,a)<2\tau$
  because $d(x,p)<\delta\leq \tau$. Accordingly,
  $|f(x)-f(p)|\leq |f(x)- f(a)| +|f(a)-f(p)|<2\varepsilon$.
\end{proof}

In the proof of Theorem \ref{theorem-special-fn-Tietze-v8:1} and what
follows, for convenience, we set
\begin{equation}
  \label{eq:UC-Extensions-v6:1}
  \mathbf{O}_A(p,\varepsilon)=\mathbf{O}(p,\varepsilon)\cap A,\quad
  \text{for every $p\in X$ and $\varepsilon>0$}. 
\end{equation}
Evidently, the set $\mathbf{O}_A(p,\varepsilon)$ could be empty for
some $p\in X\setminus A$. However,
$\mathbf{O}_A(p,\varepsilon)\neq \emptyset$ for every
$\varepsilon>d(p,A)$. In fact,
$d(p,A)=\inf_{a\in \mathbf{O}_A(p,\varepsilon)} d(p,a)$, whenever
$\varepsilon>d(p,A)$. Below, we translate this property only in terms
of the open balls centred at the points of $A$. Namely, if $p\in A$
and $\tau>0$, then
\begin{equation}
  \label{eq:UC-Extensions-v20:1}
  d(x,A)={\inf}\big\{d(x,a): a\in \mathbf{O}_A(p,2\tau)\big\},
  \quad \text{for every $x\in \mathbf{O}(p,\tau)$.}
\end{equation}
This follows from the fact that
$\emptyset\neq \mathbf{O}_A(x,\tau)\subset
\mathbf{O}_A(p,2\tau)$.\medskip

Finally, we will also use the property of bounded functions
$g,h:Z\to \R$ that
\begin{equation}
  \label{eq:special-fn-Tietze-v9:2}
  | \inf g - \inf h|\leq \sup |g -
  h|\quad\text{and}\quad |\sup g-\sup h|\leq \sup |g - h|.  
\end{equation}
It is a simple consequence of the fact that
$\sup(B+C)=\sup B+ \sup C$, whenever $B,C\subset \R$ are nonempty
bounded sets.

\begin{proof}[Proof of Theorem \ref{theorem-special-fn-Tietze-v8:1}]
  Let $\varphi:A\to \R$ be a bounded (uniformly) continuous function
  and $\Psi[\varphi]:X\to \R$ be the extension of $\varphi$ defined as
  in \eqref{eq:UC-Extensions-v22:1}. Whenever $\lambda\in \R$, it is
  evident from \eqref{eq:UC-Extensions-v22:1} that
  $\Psi[\varphi+\lambda]=\Psi[\varphi]+\lambda$. Accordingly, we may
  assume that $\varphi:A\to [0,\kappa]$ for some $\kappa\geq 1$. In
  this setting, we will first show that $\Psi[\varphi]$ is continuous
  at the points of $A$.  Also, that it is strongly uniformly
  continuous on $A$ provided $\varphi$ is uniformly continuous. So,
  take a point $p\in A$ and $0<\varepsilon\leq\kappa$. Since
  $\varphi:A\to [0,\kappa]$ is continuous at $p$, there exists
  $\delta>0$ such that $|\varphi(a)-\varphi(p)|<\varepsilon$ for every
  $a\in \mathbf{O}_A(p,4\kappa\delta)$. If $\varphi$ is uniformly
  continuous, we may assume that this $\delta$ is the same for all
  points of $A$, namely that $|\varphi(a)-\varphi(b)|<\varepsilon$,
  for every $a,b\in A$ with $d(a,b)<4\kappa\delta$.  Take
  $x\in \mathbf{O}(p,\delta)\setminus A$. If
  $a\in \mathbf{O}_A(p,4\kappa\delta)$, then
  $ \varphi(p)-\varepsilon<\varphi(a)
  +\frac{d(a,x)}{d(x,A)}-1<[\varphi(p)+\varepsilon]+
  \frac{d(a,x)}{d(x,A)}-1$ because $\frac{d(a,x)}{d(x,A)}-1\geq 0$.
  Since $\varphi(p)\leq \kappa$ and $\varepsilon\leq\kappa$, it
  follows from \eqref{eq:UC-Extensions-v20:1} that
  \[
    \varphi(p)-\varepsilon\leq \inf_{a\in
      \mathbf{O}_A(p,4\kappa\delta)}\left[
      \varphi(a)+\frac{d(a,x)}{d(x,A)}-1\right] \leq
    \varphi(p)+\varepsilon\leq2\kappa.
  \]
  If $a\in A\setminus \mathbf{O}(p,4\kappa\delta)$, then
  $\varphi(a)+\frac{d(a,x)}{d(x,A)}-1> \varphi(a)+3\kappa-1\geq
  2\kappa$ because $\varphi(a)\geq 0$, $\kappa\geq 1$ and
  $d(a,x)\geq 3\kappa\delta>3\kappa d(p,x)\geq 3\kappa
  d(x,A)$. Accordingly,
  $\Psi[\varphi](x)=\inf_{a\in \mathbf{O}_A(p,4\kappa\delta)}\left[
    \varphi(a)+\frac{d(a,x)}{d(x,A)}-1\right]$ and, therefore,
  $|\Psi[\varphi](x)-\varphi(p)|\leq \varepsilon$.\smallskip

  Next, take $\tau>0$, and let us show that
  $\Psi[\varphi]\uhr X\setminus \mathbf{O}(A,\tau)$ is uniformly
  continuous.  To this end, let $f:X\to \R$ be the function defined as
  in \eqref{eq:UC-Extensions-v19:1} with respect to the given function
  $\varphi:X\to [0,\kappa]$. Then by Proposition
  \ref{proposition-UC-Extensions-v19:1}, $f$ is Lipschitz.  Since
  $\Psi[\varphi](x)=\frac{f(x)}{d(x,A)}-1$, $x\in X\setminus A$, and
  the restriction of $\frac1{d(\cdot,A)}$ on
  $X\setminus \mathbf{O}(A,\tau)$ is Lipschitz,
  $\Psi[\varphi]\uhr X\setminus \mathbf{O}(A,\tau)$ is also
  Lipschitz. Thus, $\Psi[\varphi]$ is continuous and by Proposition
  \ref{proposition-Mandelkern:1}, it is also uniformly continuous
  provided so is $\varphi$. In other words, $\Psi:C^*(A)\to C^*(X)$ is
  an extension operator which preserves uniform continuity. Moreover,
  by \eqref{eq:UC-Extensions-v22:1}, $\Psi$ is isotone and preserves
  the constants as well. Finally, take $\varphi,\psi\in C^*(A)$. Then
  for $p\in X\setminus A$, it follows from
  \eqref{eq:UC-Extensions-v22:1} and \eqref{eq:special-fn-Tietze-v9:2}
  that
  \[
    |\Psi[\varphi](p)-\Psi[\psi](p)|\leq \sup_{a\in
      A}|\varphi(a)-\psi(a)|=\|\varphi-\psi\|.
  \]
  Hence, $\|\Psi[\varphi]-\Psi[\psi]\|=\|\varphi-\psi\|$ because
  $\Psi$ is an extension operator.
\end{proof}

We conclude this section with two remarks regarding Theorem
\ref{theorem-special-fn-Tietze-v8:1}.

\begin{remark}
  \label{remark-UC-Extensions-v22:1}
  The extension operator $\Psi:C^*(A)\to C^*(X)$ defined in
  \eqref{eq:UC-Extensions-v22:1} is not necessarily linear. For
  instance, take $X=[-1,+\infty)$ and $A=[-1,0]$. Also, let
  $\varphi(a)=a$, $a\in A$, be the identity function.  If $a\in A$ and
  $p\in X\setminus A$, then $d(a,p)=p-a$ and $d(p,A)=p$. Hence, for
  $k\in \N$ and $p\in X\setminus A$, we get that
  $$
  \Psi[k\varphi](p)=\inf_{a\in
    A}\left[ka+\frac{p-a}p-1\right]=\inf_{-1\leq a\leq
    0}a\left[k-\frac1p\right]=
  \begin{cases}
    0 &\text{if $p\leq \frac1k$,}\\
    \frac1p-k &\text{if $p\geq \frac1k$.}
  \end{cases}
  $$
  Accordingly,
  $\Psi[2\varphi]=\Psi[\varphi+\varphi]\neq 2\Psi[\varphi]$.\qed
\end{remark}

\begin{remark}
  \label{remark-UC-Extensions-v22:2}
  For a nonempty set $Z$, let $\ell_\infty(Z)$ be the Banach lattice
  of all bounded real-valued functions on $Z$ equipped with the
  sup-norm ${\|\varphi\|=\sup |\varphi|}$,
  $\varphi\in \ell_\infty(Z)$. The partial order on $\ell_\infty(Z)$
  is defined precisely as that for $C^*(Z)$, in fact $C^*(Z)$ is a
  Banach sublattice of $\ell_\infty(Z)$. The positive cone of
  $\ell_\infty(Z)$ will be denoted by $\ell_\infty^+(Z)$. Evidently,
  in our setting of $(X,d)$ and $A\subset X$,
  \eqref{eq:UC-Extensions-v22:1} defines an extension operator
  $\Psi:\ell_\infty(A)\to \ell_\infty(X)$. Moreover, the same argument
  as in Theorem \ref{theorem-special-fn-Tietze-v8:1} shows that
  $\Psi:\ell_\infty(A)\to \ell_\infty(X)$ is an isotone isometry and,
  in particular, $\Psi[\ell_\infty^+(A)]\subset \ell_\infty^+(X)$. In
  this interpretation, $\Psi$ preserves \emph{continuity} in the sense
  that $\Psi[C^*(A)]\subset C^*(X)$.\qed
\end{remark}

\section{Tietze-Like Extension Operators}
\label{sec:tietze-extenders}

If $a\in A$ and $p\in X\setminus A$, then $s=d(a,p)\geq
d(p,A)=t>0$. Motivated by this, we consider the set
\[
  \Delta=\left\{(s,t)\in\R^2: s\geq t>0\right\}.
\]
Next, to each $\mathbf{F}:\Delta\to \R$ we will associate the
``composite'' function
\[
  \mathbf{F}^*:A\times(X\setminus A)\ni (a,p)\longrightarrow
  \big(d(a,p),d(p,A)\big)\in \Delta \xrightarrow[]{\mathbf{F}}\R.
\]
In other words, $\mathbf{F}^*:A\times(X\setminus A)\to \R$ is defined
by $\mathbf{F}^*(a,p)=\mathbf{F}\big(d(a,p),d(p,A)\big)$, for every
$a\in A$ and $p\in X\setminus A$.  Finally, following Tietze's
construction \eqref{eq:special-fn-Tietze:1}, for
$\mathbf{F}:\Delta\to (0,1]$ and a bounded function
$\varphi:A\to [0,+\infty)$, let
$\Omega_\mathbf{F}[\varphi]:X\to [0,+\infty)$ be the extension of
$\varphi$ defined by
\begin{equation}
  \label{eq:special-fn-Tietze-v5:1}
  \Omega_\mathbf{F}[\varphi](p)=
  \sup_{a\in A} \varphi(a)\cdot \mathbf{F}^*(a,p),\quad
  \text{for every $p\in X\setminus A$.} 
\end{equation}
Since $0<\mathbf{F}\leq 1$, it follows that
$\varphi(a)\cdot \mathbf{F}^*(a,p)\leq \varphi(a)$, for every $a\in A$
and ${p\in X\setminus A}$. Accordingly, $\Omega_\mathbf{F}[\varphi]$
is also a bounded function, in fact it is bounded above by
$\|\varphi\| = \sup_{a\in A}\varphi(a)$. Thus,
$\Omega_\mathbf{F}:\ell_\infty^+(A)\to \ell_\infty^+(X)$ is an
extension operator which is an isotone isometry; the latter follows
easily from \eqref{eq:special-fn-Tietze-v9:2} and
\eqref{eq:special-fn-Tietze-v5:1}. Furthermore, $\Omega_\mathbf{F}$ is
\emph{sublinear}, i.e.\ for every ${\varphi,\psi\in \ell_\infty^+(A)}$
and $\lambda\geq 0$ we have that
$\Omega_\mathbf{F}[\varphi+\psi]\leq \Omega_\mathbf{F}[\varphi] +
\Omega_\mathbf{F}[\psi]$ ($\Omega_\mathbf{F}$ is \emph{subadditive})
and
$\Omega_\mathbf{F}[\lambda\varphi] = \lambda
\Omega_\mathbf{F}[\varphi]$ ($\Omega_\mathbf{F}$ is \emph{positively
  homogeneous}).\medskip

In this section, we will show that the extension operator
$\Omega_\mathbf{F}:\ell_\infty^+(A)\to \ell_\infty^+(X)$ preserves
both continuity and uniform continuity based only on the following
three general properties of $\mathbf{F}:\Delta\to (0,1]$.

\begin{gather}
  \label{eq:special-fn-Tietze-v6:1}
  \begin{cases}
    \lim_{t\to 0}\mathbf{F}(t,t)=1, &\text{and}\\
    \lim_{t\to 0}\mathbf{F}(s,t)=0, &\text{for every fixed $s>0$;}
  \end{cases} \\
  \label{eq:special-fn-Tietze-v6:2}
  \mathbf{F}(\cdot,t)\ \text{is
    decreasing, for every fixed $t>0$;}\\
  \label{eq:special-fn-Tietze-v6:3}
  \text{Whenever $\tau>0$,}\ \mathbf{F}(s,t)\ \text{is uniformly
    continuous on $s\geq t\geq \tau$.}
\end{gather}
\begin{center}
  \begin{tikzpicture}[baseline,thick]
    \draw[line width=.9pt, ->] (-.3,0) -- (5,0) node[anchor=north
    east] {\small $s$}; \draw[line width=.9pt,->] (0,-.35) -- (0,2.7)
    node[anchor=north east] {\small $t$}; \draw[white!30!black,line
    width=0.7pt, <-] (0.05,0.05) -- (2.5,2.5); \draw[white!30!black]
    (1.3,1.15) node[rotate=45,anchor=south] {\tiny
      $1\xleftarrow[0\leftarrow t]{}\mathbf{F}(t,t)$};
    \draw[white!30!black,line width=0.7pt, dashed,->] (1.5,1.45) --
    (1.5,0.05) node[anchor=south west] {\tiny
      $\lim_{t\to 0}\mathbf{F}(s,t)= 0$}; \draw[white!30!black,line
    width=0.7pt, dashed, ->] (1.1,1.08) -- (4.5,1.08)
    node[anchor=south east] {\tiny $\mathbf{F}(s,t)\searrow\quad$};
    \filldraw[white] (0,0) circle [radius=1pt]; \draw (0,0) circle
    [radius=1pt] node[anchor=south east] {\small $\mathbf{0}$};
  \end{tikzpicture}
\end{center}
In this setting, each function $\mathbf{F}:\Delta\to (0,1]$ satisfying
\eqref{eq:special-fn-Tietze-v6:1}, \eqref{eq:special-fn-Tietze-v6:2}
and \eqref{eq:special-fn-Tietze-v6:3} will be called a \emph{Tietze
  Extender}.

\begin{theorem}
  \label{theorem-special-fn-v2:1}
  Let $\mathbf{F}:\Delta\to (0,1]$ be a Tietze extender. Then the
  extension operator
  $\Omega_\mathbf{F}:\ell_\infty^+(A)\to \ell_\infty^+(X)$, defined as
  in \eqref{eq:special-fn-Tietze-v5:1}, is a sublinear isotone
  isometry which preserves both continuity and uniform continuity.
\end{theorem}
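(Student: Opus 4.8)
Since $\Omega_\mathbf{F}$ was already shown above to be a sublinear isotone isometric extension operator, the plan is to establish only the two continuity claims: that $\Omega_\mathbf{F}[\varphi]$ is continuous whenever $\varphi\in\ell_\infty^+(A)$ is continuous, and uniformly continuous whenever $\varphi$ is. Discarding the trivial case $\varphi=0_A$, I fix a bounded (uniformly) continuous $\varphi:A\to[0,\kappa]$ with $\kappa=\|\varphi\|>0$ and follow the scheme of Theorem \ref{theorem-special-fn-Tietze-v8:1}: first show that $\Omega_\mathbf{F}[\varphi]$ is (uniformly) continuous at the points of $A$, then that for each $\tau>0$ its restriction to $X\setminus\mathbf{O}(A,\tau)$ is uniformly continuous. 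The continuity claim then follows by combining continuity at the points of $A$ with the observation that each $x_0\in X\setminus A$ has, for $\tau=d(x_0,A)/2$, a neighbourhood $\mathbf{O}(x_0,\tau)$ contained in $X\setminus\mathbf{O}(A,\tau)$; the uniform continuity claim follows from Proposition \ref{proposition-Mandelkern:1}.

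The restriction to $X\setminus\mathbf{O}(A,\tau)$ is the routine half. For $x,y\in X\setminus\mathbf{O}(A,\tau)$ and any $a\in A$, both points $\big(d(a,x),d(x,A)\big)$ and $\big(d(a,y),d(y,A)\big)$ lie in the region $\{s\geq t\geq\tau\}$, and since $|d(a,x)-d(a,y)|\leq d(x,y)$ and $|d(x,A)-d(y,A)|\leq d(x,y)$, they are within distance $\sqrt2\,d(x,y)$ of one another. Property \eqref{eq:special-fn-Tietze-v6:3} therefore bounds $|\mathbf{F}^*(a,x)-\mathbf{F}^*(a,y)|$ by the modulus of uniform continuity of $\mathbf{F}$ on that region, uniformly in $a$; feeding this through \eqref{eq:special-fn-Tietze-v9:2} applied to $g(a)=\varphi(a)\mathbf{F}^*(a,x)$ and $h(a)=\varphi(a)\mathbf{F}^*(a,y)$, and using only the bound $\varphi\leq\kappa$, yields uniform continuity of $\Omega_\mathbf{F}[\varphi]$ on $X\setminus\mathbf{O}(A,\tau)$. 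Notably this step needs neither continuity nor uniform continuity of $\varphi$.

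The delicate half is (uniform) continuity at a point $p\in A$, where all three defining properties of a Tietze extender must cooperate as $d(x,A)\to0$. Fixing $\varepsilon>0$, I would choose $R>0$ from the (uniform) continuity of $\varphi$ so that $|\varphi(a)-\varphi(p)|<\varepsilon$ whenever $d(a,p)<R$, and split the supremum in \eqref{eq:special-fn-Tietze-v5:1} according to whether $a\in\mathbf{O}_A(p,R)$ or not. For the upper estimate, the near terms contribute at most $\varphi(p)+\varepsilon$ because $\mathbf{F}^*\leq1$, while for the far terms monotonicity \eqref{eq:special-fn-Tietze-v6:2} gives $\mathbf{F}^*(a,x)\leq\mathbf{F}\big(R/2,d(x,A)\big)$ once $d(x,p)<R/2$, and the vanishing limit in \eqref{eq:special-fn-Tietze-v6:1} makes this arbitrarily small as $d(x,A)\to0$; thus $\Omega_\mathbf{F}[\varphi](x)\leq\varphi(p)+\varepsilon$ for $x$ close to $p$. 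For the lower estimate I would pick, for each such $x$, a near-minimizer $a\in A$ with $d(a,x)$ as close to $d(x,A)$ as desired; this keeps $d(a,p)$ small, hence $\varphi(a)\geq\varphi(p)-\varepsilon$, while the continuity of $\mathbf{F}$ on $\Delta$ (itself a consequence of \eqref{eq:special-fn-Tietze-v6:3}, since every point of $\Delta$ has second coordinate $>0$) together with the diagonal limit $\mathbf{F}(t,t)\to1$ in \eqref{eq:special-fn-Tietze-v6:1} forces $\mathbf{F}^*(a,x)$ close to $1$.

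I expect this lower estimate to be the main obstacle, and the point that makes it go through is that, being a supremum, the lower bound only requires the \emph{existence} of one good near-minimizer for each individual $x$; so the proximity $d(a,x)\approx d(x,A)$ may be chosen as sharply as the (possibly non-uniform) diagonal modulus of $\mathbf{F}$ demands, while all the genuine uniformity in $p$ is carried by the $t\to0$ limits of \eqref{eq:special-fn-Tietze-v6:1} and by the single modulus of the uniformly continuous $\varphi$. Since the threshold $R$ and the two bounds on $d(x,A)$ extracted from \eqref{eq:special-fn-Tietze-v6:1} depend only on $\varepsilon$ and not on $p$, the estimates at the points of $A$ are uniform, which is precisely what Proposition \ref{proposition-Mandelkern:1} needs in order to conclude global uniform continuity; discarding the uniformity of $\varphi$ gives plain continuity in the same manner.
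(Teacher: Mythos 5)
Your proposal is correct and follows essentially the same route as the paper: the same decomposition into (uniform) continuity at the points of $A$ (the paper's Lemma \ref{lemma-special-fn-Tietze-v5:1}) and uniform continuity of the restriction to $X\setminus \mathbf{O}(A,\tau)$ via the uniform continuity of $\mathbf{F}^*$ there (Proposition \ref{proposition-special-fn-Tietze-v6:2}) together with \eqref{eq:special-fn-Tietze-v9:2}, all glued by Proposition \ref{proposition-Mandelkern:1}. The only cosmetic difference is in the lower estimate at a point of $A$, where you select a near-minimizer $a$ with $d(a,x)$ close to $d(x,A)$, whereas the paper invokes the exact identity $\sup_{a}\mathbf{F}^*(a,x)=\mathbf{F}\big(d(x,A),d(x,A)\big)$ of Proposition \ref{proposition-special-fn-Tietze-v5:1}; both rest on the same monotonicity, local continuity of $\mathbf{F}$, and the diagonal limit $\mathbf{F}(t,t)\to 1$.
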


Regarding the proper place of Theorem \ref{theorem-special-fn-v2:1},
here are the simplest examples of two different types of Tietze
extenders.

\begin{example}
  \label{example-UC-Extensions-v6:1}
  Following \eqref{eq:special-fn-Tietze-v5:1} and Tietze's extension
  construction \eqref{eq:special-fn-Tietze:1}, let
  \begin{equation}
    \label{eq:special-fn-Tietze:2}
    \mathbf{T}(s,t)= \frac1{\big(1+s^2\big)^{\frac1t}},\quad
    s\geq t>0.
  \end{equation}    
  Then $\mathbf{T}:\Delta\to (0,1]$ is a Tietze extender. Indeed,
  \eqref{eq:special-fn-Tietze-v6:1} and
  \eqref{eq:special-fn-Tietze-v6:2} are evident for this function.  To
  see that $\mathbf{T}(s,t)$ also has the property in
  \eqref{eq:special-fn-Tietze-v6:3}, consider the function
  $\mathbf{G}(x,y)=\frac1{\left(1+x^2\right)^y}=e^{-y\ln\left(1+x^2\right)}$
  for $x\geq0$ and $y\geq0$. Then it is uniformly continuous being a
  continuous function with
  $\lim_{\substack{x\to+\infty\\ y\to+\infty}} \mathbf{G}(x,y)=
  0$. Accordingly, so is
  $\mathbf{T}(s,t)=\mathbf{G}\left(s,\frac1t\right)$ on the domain
  $s\geq t\geq \tau>0$. Thus, Theorem \ref{theorem-special-fn-v2:1} is
  valid for $\mathbf{T}(s,t)$. Since Tietze's extension construction
  \eqref{eq:special-fn-Tietze:1} is identical to that in
  \eqref{eq:special-fn-Tietze-v5:1}, it preserves not only continuity,
  but also uniform continuity. On the other hand, this construction
  doesn't preserves the constants (in particular, the bounds). For
  instance, $1_A:A\to [1,\mu]$ for every $\mu\geq 1$, but the
  extension $\Omega_\mathbf{T}[1_A]$ is not necessarily the constant
  function $1_X$ of $X$. Namely, $p\in X\setminus A$ implies that
  \[
    \Omega_\mathbf{T}[1_A](p)=\sup_{a\in A}\frac1{\left(1 +
        \left[d(a,p)\right]^2\right)^{\frac1{d(p,A)}}}= \frac1{\left(1
        + \left[d(p,A)\right]^2\right)^{\frac1{d(p,A)}}}<1.\hfill\qed
  \]
\end{example}

\begin{example}
  \label{example-UC-Extensions-v7:1}
  Following \eqref{eq:special-fn-Tietze-v5:1} and Riesz's extension
  construction \eqref{eq:special-fn-Tietze-v3:2}, let
  \begin{equation}
    \label{eq:special-fn-Tietze-v3:4}
    \mathbf{R}(s,t)= \frac ts,\quad s\geq t>0.
  \end{equation}
  It is obvious that $\mathbf{R}:\Delta\to (0,1]$ is a Tietze
  extender. Hence, Theorem \ref{theorem-special-fn-v2:1} is also valid
  for the Riesz function. In contrast to the Tietze function
  $\mathbf{T}(s,t)$ in \eqref{eq:special-fn-Tietze:2} of Example
  \ref{example-UC-Extensions-v6:1}, the Riesz function
  $\mathbf{R}(s,t)$ preserves the bounds because $\mathbf{R}(t,t)=1$
  for every $t>0$. Indeed, for $\varphi:A\to [\lambda,\mu]$ and
  $p\in X\setminus A$, where $\mu\geq \lambda\geq 0$, we have that
  \[
    \Omega_\mathbf{R}[\varphi](p)=\sup_{a\in A}\varphi(a)\cdot
    \mathbf{R}^*(a,p) \geq \lambda\sup_{a\in
      A}\frac{d(p,A)}{d(a,p)}=\lambda\frac{d(p,A)}{d(p,A)}=\lambda.
  \]
  Accordingly, Theorem \ref{theorem-special-fn-v2:1} gives McShane's
  result on uniformly continuous extensions at once. Let us remark
  that in Theorem \ref{theorem-Uniform_cont-Ext-v1:1}, McShane first
  extended $\varphi$ to a uniformly continuous function on $X$, and
  next adjusted its bounds.\qed
\end{example}

Following Dieudonn\'e's extension construction
\eqref{eq:special-fn-Tietze-v3:3}, for
$\mathbf{G}:\Delta\to [1,+\infty)$ and a bounded function
$\varphi:A\to [0,+\infty)$, we may define an alternative extension
$\mho_\mathbf{G}[\varphi]:X\to [\inf\varphi,+\infty)$ of $\varphi$ by
\begin{equation}
  \label{eq:UC-Extensions-v11:1}
  \mho_\mathbf{G}[\varphi](p)=\inf_{a\in A}\varphi(a)\cdot
  \mathbf{G}^*(a,p),\quad p\in X\setminus A.
\end{equation}
Evidently if $\mathbf{F}:\Delta\to (0,1]$, then
$\mathbf{G}=\frac1{\mathbf{F}}:\Delta\to [1,+\infty)$. Thus, for a
bounded function $\varphi:A\to (0,+\infty)$ we have that
\begin{equation}
  \label{eq:UC-Extensions-v11:2}
  \mho_{\frac1{\mathbf{F}}}[\varphi]=
  \frac1{\Omega_\mathbf{F}\left[\frac1\varphi\right]}. 
\end{equation}
  
\begin{example}
  \label{example-UC-Extensions-v6:2}
  Following Dieudonn\'e's extension construction
  \eqref{eq:special-fn-Tietze-v3:3}, define a function
  $\mathbf{D}:\Delta\to [1,+\infty)$ by
  \begin{equation}
    \label{eq:special-fn-Tietze-v3:5}
    \mathbf{D}(s,t)=\frac st,\quad s\geq t>0.
  \end{equation}
  Then for $\mu>\lambda>0$ and $\varphi:A\to [\lambda,\mu]$,
  Dieudonn\'e's extension is identical to that in
  \eqref{eq:UC-Extensions-v11:1}, and is given by
  $\mho_\mathbf{D}[\varphi]:X\to [\lambda,\mu]$. However, the
  multiplicative inverse $\frac1{\mathbf{D}}$ is the Riesz function
  $\mathbf{R}$ in \eqref{eq:special-fn-Tietze-v3:4}. Hence, by
  \eqref{eq:UC-Extensions-v11:2}, Example
  \ref{example-UC-Extensions-v7:1} and Theorem
  \ref{theorem-special-fn-v2:1},
  $\mho_\mathbf{D}[\varphi] =\frac1
  {\Omega_{\mathbf{R}}\left[\frac1\varphi \right]}:X\to
  \left[\lambda,\mu\right]$ is (uniformly) continuous whenever so is
  $\frac1\varphi$, equivalently $\varphi$. Accordingly, Dieudonn\'e's
  extension $\mho_\mathbf{D}[\varphi]$ is (uniformly) continuous
  precisely when so is $\varphi$. Thus, Theorem
  \ref{theorem-special-fn-v2:1} also contains Theorem
  \ref{theorem-Mandelkern:1} as a special case. \qed
\end{example}

The proof of Theorem \ref{theorem-special-fn-v2:1} is based on three
properties of Tietze extenders. The first one is the following relaxed
representation of the extension $\Omega_\mathbf{F}[1_A]$ of the
constant function $1_A$.

\begin{proposition}
  \label{proposition-special-fn-Tietze-v5:1}
  Let $\mathbf{F}:\Delta\to (0,1]$ be a continuous function as in
  \eqref{eq:special-fn-Tietze-v6:2}, and
  $\Omega_\mathbf{F}[1_A]:X\to (0,1]$ be the extension of\/
  $1_A:A\to \{1\}$ defined as in \eqref{eq:special-fn-Tietze-v5:1}.
  If $p\in A$, $\tau>0$ and $x\in \mathbf{O}(p,\tau)\setminus A$, then
  \[
    \Omega_\mathbf{F}[1_A](x)= \mathbf{F}(d(x,A),d(x,A))=\sup_{a\in
      \mathbf{O}_A(p,2\tau)} \mathbf{F}^*(a,x).
  \]
\end{proposition}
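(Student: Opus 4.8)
The plan is to reduce both equalities to a single elementary observation about monotone continuous functions of one variable. Fix $x\in\mathbf{O}(p,\tau)\setminus A$ and abbreviate $t=d(x,A)$; since $x\notin A$ and $A$ is closed, $t>0$. Unravelling \eqref{eq:special-fn-Tietze-v5:1}, the left-hand side is $\Omega_\mathbf{F}[1_A](x)=\sup_{a\in A}\mathbf{F}(d(a,x),t)$, while the right-hand supremum is $\sup_{a\in\mathbf{O}_A(p,2\tau)}\mathbf{F}(d(a,x),t)$. In both, the quantity being maximised is $g(d(a,x))$, where $g(s)=\mathbf{F}(s,t)$ is decreasing on $[t,+\infty)$ by \eqref{eq:special-fn-Tietze-v6:2} and continuous because $\mathbf{F}$ is.

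The observation I would isolate is this: if $g:[t,+\infty)\to\R$ is continuous and decreasing, and $S\subset[t,+\infty)$ satisfies $\inf S=t$, then $\sup_{s\in S}g(s)=g(t)$. The inequality ``$\leq$'' is immediate, since every $s\in S$ has $s\geq t$ and hence $g(s)\leq g(t)$. For ``$\geq$'', I would choose $s_n\in S$ with $s_n\to t$ and use continuity to get $g(s_n)\to g(t)$.

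It then remains to verify, in each case, that the relevant set of distances has infimum exactly $t$. For the full index set $A$ this is simply the definition of the distance function: $d(a,x)\geq d(x,A)=t$ for all $a\in A$, while $\inf_{a\in A}d(a,x)=d(x,A)=t$; the observation yields $\Omega_\mathbf{F}[1_A](x)=g(t)=\mathbf{F}(t,t)=\mathbf{F}(d(x,A),d(x,A))$. For the restricted index set $\mathbf{O}_A(p,2\tau)$, the only extra ingredient is that shrinking the index set does not raise the infimum of the distances, and this is precisely \eqref{eq:UC-Extensions-v20:1}: because $p\in A$, $\tau>0$ and $x\in\mathbf{O}(p,\tau)$, one has $\inf\{d(x,a):a\in\mathbf{O}_A(p,2\tau)\}=d(x,A)=t$. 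Applying the observation once more gives $\sup_{a\in\mathbf{O}_A(p,2\tau)}\mathbf{F}^*(a,x)=g(t)=\mathbf{F}(d(x,A),d(x,A))$, closing the chain of equalities.

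The only genuinely delicate point is the ``$\geq$'' direction of the observation, and it is there that continuity of $\mathbf{F}$ --- not merely its monotonicity in the first variable --- is indispensable: one needs the boundary value $\mathbf{F}(t,t)$ to arise as the limit of $\mathbf{F}(d(a_n,x),t)$ along minimising points $a_n$, even when the infimum $d(x,A)$ is not attained in $A$. Everything else is routine bookkeeping.
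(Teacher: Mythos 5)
Your proof is correct and follows essentially the same route as the paper: both arguments exploit that a continuous decreasing function of $s$ turns the infimum of the distance set into a supremum of its values, and both invoke \eqref{eq:UC-Extensions-v20:1} to see that restricting the index set to $\mathbf{O}_A(p,2\tau)$ does not change the infimum $d(x,A)$. You merely make explicit (via the isolated one-variable observation and the minimising sequence) the step the paper compresses into the identity $\sup_{a}\mathbf{F}(d(a,x),t)=\mathbf{F}(\inf_a d(a,x),t)$.
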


\begin{proof}
  Since $\mathbf{F}$ is continuous, it follows from
  \eqref{eq:UC-Extensions-v20:1} and \eqref{eq:special-fn-Tietze-v6:2}
  that
  \begin{align*}
    \Omega_\mathbf{F}[1_A](x)
    &=\mathbf{F}\left(\inf_{a\in A}d(a,x),d(x,A)\right)
      = \mathbf{F}(d(x,A),d(x,A))\\
    &=\mathbf{F}\left(\inf_{a\in \mathbf{O}_A(p,2\tau)}d(a,x),d(x,A)\right)
      =\sup_{a\in \mathbf{O}_A(p,2\tau)}
      \mathbf{F}^*(a,x).\qedhere
  \end{align*}
\end{proof}

Tietze extenders also have the following property related to uniform
continuity.

\begin{proposition}
  \label{proposition-special-fn-Tietze-v6:2}
  Let $\mathbf{F}:\Delta\to \R$ be a function satisfying
  \eqref{eq:special-fn-Tietze-v6:3}, and ${\varepsilon,\tau>0}$. Then
  there exists $\delta>0$ such that for every
  $x,p\in X\setminus\mathbf{O}(A,\tau)$ with $d(x,p)<\delta$,
  \begin{equation}
    \label{eq:special-fn-Tietze-v9:1}
    |\mathbf{F}^*(a,x)-\mathbf{F}^*(a,p)|<\varepsilon,\quad
    \text{for all $a\in A$.}
  \end{equation}
\end{proposition}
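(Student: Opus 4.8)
The plan is to reduce the claim to the uniform continuity hypothesis \eqref{eq:special-fn-Tietze-v6:3} on the fixed region $\{(s,t):s\geq t\geq\tau\}$, exploiting that the distance functions appearing as the arguments of $\mathbf{F}$ are $1$-Lipschitz in their moving point. First I would check that the hypotheses on $x$ and $p$ force both argument pairs into this region. Indeed, since $x,p\in X\setminus\mathbf{O}(A,\tau)$ we have $d(x,A)\geq\tau$ and $d(p,A)\geq\tau$; and since $a\in A$, we automatically have $d(a,x)\geq d(x,A)\geq\tau$ and $d(a,p)\geq d(p,A)\geq\tau$. Hence both pairs $\big(d(a,x),d(x,A)\big)$ and $\big(d(a,p),d(p,A)\big)$ lie in $\{(s,t):s\geq t\geq\tau\}$, for \emph{every} $a\in A$.

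Next I would invoke \eqref{eq:special-fn-Tietze-v6:3} for the given $\varepsilon$ and $\tau$ to obtain some $\delta_0>0$ such that any two points of this region lying within $\delta_0$ of one another are mapped by $\mathbf{F}$ to values within $\varepsilon$. The crucial feature is that this $\delta_0$ depends only on $\varepsilon$ and $\tau$, not on $a$. I would then control the separation of the two argument pairs by the triangle inequality: since $p\mapsto d(a,p)$ and $p\mapsto d(p,A)$ are both $1$-Lipschitz, one has $|d(a,x)-d(a,p)|\leq d(x,p)$ and $|d(x,A)-d(p,A)|\leq d(x,p)$. Thus the two pairs are at $\R^2$-distance at most $\sqrt{2}\,d(x,p)$ (or simply $d(x,p)$ in the sup metric). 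Choosing $\delta=\delta_0/\sqrt{2}$ then guarantees that $d(x,p)<\delta$ brings the two pairs within $\delta_0$, whence $|\mathbf{F}^*(a,x)-\mathbf{F}^*(a,p)|<\varepsilon$.

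The only content worth emphasising — and it is what makes the estimate uniform in $a$ — is that neither the Lipschitz bounds on the coordinate differences nor the $\delta_0$ coming from uniform continuity involve the parameter $a$, so the same $\delta$ works simultaneously for all $a\in A$. No genuine obstacle arises: the proposition amounts to the observation that precomposing a function $\mathbf{F}$ which is uniformly continuous on the region $s\geq t\geq\tau$ with the $1$-Lipschitz maps $p\mapsto d(a,p)$ and $p\mapsto d(p,A)$ preserves the modulus of continuity uniformly over $a$. The single point requiring care is the verification that the arguments never leave the region on which uniform continuity is assumed, which is precisely what the restriction $x,p\notin\mathbf{O}(A,\tau)$ secures.
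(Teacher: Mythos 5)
Your proof is correct and essentially identical to the paper's: both verify that $x,p\notin\mathbf{O}(A,\tau)$ places the pairs $\big(d(a,x),d(x,A)\big)$ and $\big(d(a,p),d(p,A)\big)$ in the region $s\geq t\geq\tau$, use the $1$-Lipschitz property of $d(a,\cdot)$ and $d(\cdot,A)$ to bound the coordinate differences by $d(x,p)$, and then apply \eqref{eq:special-fn-Tietze-v6:3} with a $\delta$ independent of $a$. The only cosmetic difference is your $\sqrt{2}$ adjustment for the Euclidean metric on $\Delta$, where the paper phrases uniform continuity coordinatewise.
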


\begin{proof}
  By \eqref{eq:special-fn-Tietze-v6:3}, there is $\delta>0$ such that
  $|\mathbf{F}(s_1,t_1)-\mathbf{F}(s_0,t_0)|<\varepsilon$, for every
  $s_i\geq t_i\geq \tau$, $i=0,1$, with $|s_1-s_0|<\delta$ and
  $|t_1-t_0|<\delta$.  If $a\in A$ and
  $x,p\in X\setminus \mathbf{O}(A,\tau)$ with $d(x,p)<\delta$, then
  $s_0=d(a,p)\geq d(p,A)=t_0\geq \tau$ and
  ${s_1=d(a,x)\geq d(x,A)=t_1\geq\tau}$. Moreover,
  $|d(a,x)-d(a,p)|\leq d(x,p)<\delta$ and
  $|d(x,A)-d(p,A)|\leq d(x,p)<\delta$. 
  Hence, $|\mathbf{F}^*(a,x)-\mathbf{F}^*(a,p)|<\varepsilon$.
\end{proof}

We conclude the preparation for the proof of Theorem
\ref{theorem-special-fn-v2:1} with a crucial observation about
continuity and uniform continuity of the extension defined in
\eqref{eq:special-fn-Tietze-v5:1}.

\begin{proposition}
  \label{proposition-UC-Extensions-rv2:1}
  Let $\mathbf{F}:\Delta\to (0,1]$ be a continuous function satisfying
  \eqref{eq:special-fn-Tietze-v6:1} and
  \eqref{eq:special-fn-Tietze-v6:2}. If $\varphi\in \ell_\infty^+(A)$
  is a continuous function, then the extension
  $\Omega_\mathbf{F}[\varphi]\in\ell_\infty^+(X)$ defined as in
  \eqref{eq:special-fn-Tietze-v5:1} is continuous at the points of
  $A$. Moreover, $\Omega_\mathbf{F}[\varphi]$ is strongly uniformly
  continuous on $A$ provided $\varphi$ is uniformly continuous.
\end{proposition}

\begin{proof}
  Take a point $p\in A$ and $\varepsilon>0$. Since
  $\varphi:A\to [0,+\infty)$ is continuous at $p$, there exists
  $\tau>0$ such that $|\varphi(a)-\varphi(p)|<\varepsilon$ for every
  $a\in \mathbf{O}_A(p,2\tau)$, see \eqref{eq:UC-Extensions-v6:1}. If
  $\varphi$ is uniformly continuous, we may assume that this $\tau$ is
  the same for all points of $A$, namely that
  $|\varphi(a)-\varphi(b)|<\varepsilon$, for every $a,b\in A$ with
  $d(a,b)<2\tau$. Thus, it will be now sufficient to show that there
  exists $\delta>0$ depending only on $\tau$ and the function
  $\mathbf{F}$ such that
  \begin{equation}
    \label{eq:special-fn-Tietze-v4:2}
    \varphi(p)-2\varepsilon \leq \Omega_\mathbf{F}[\varphi](x)\leq
    \varphi(p)+\varepsilon,\quad \text{for every $x\in 
      \mathbf{O}(p,\delta)\setminus A$.}
  \end{equation}
  In the construction below, we treat each part of this inequality
  separately.\smallskip
  
  For convenience take $\lambda>\varepsilon$ such that
  $\varphi:A\to [0,\lambda]$. Then for $s\geq \tau\geq t>0$, it
  follows from \eqref{eq:special-fn-Tietze-v6:1} and
  \eqref{eq:special-fn-Tietze-v6:2} that
  $\mathbf{F}(s,t)\leq \mathbf{F}(\tau,t)\xrightarrow[t\to
  0]{}0<\frac\varepsilon\lambda$.  Hence, there is $0<r\leq \tau$ such
  that $\mathbf{F}(s,t)<\frac\varepsilon\lambda$ for every
  $0<t<r\leq\tau \leq s$. This implies the right-hand side of
  \eqref{eq:special-fn-Tietze-v4:2} for every $0<\delta\leq
  r$. Indeed, take $x\in \mathbf{O}(p,r)\setminus A$ and $a\in A$.  If
  $d(a,p)\geq 2\tau$, then $t=d(x,A)<r\leq\tau\leq d(a,x)=s$ and,
  accordingly,
  \[
    \varphi(a)\cdot \mathbf{F}^*(a,x)=\varphi(a)\cdot
    \mathbf{F}(s,t)\leq\varphi(a)\cdot\frac\varepsilon\lambda\leq
    \varepsilon\leq\varphi(p)+\varepsilon.
  \]
  Otherwise, if $d(a,p)<2\tau$, then
  $\varphi(a)\cdot \mathbf{F}^*(a,x)\leq \varphi(a)\cdot
  1<\varphi(p)+\varepsilon$. Thus,
  $ \Omega_\mathbf{F}[\varphi](x)=\sup_{a\in A} \varphi(a)\cdot
  \mathbf{F}^*(a,x)\leq \varphi(p)+\varepsilon$.\medskip
  
  For the left-hand side of \eqref{eq:special-fn-Tietze-v4:2}, we will
  use \eqref{eq:special-fn-Tietze-v6:1} that
  $\mathbf{F}(t,t)\xrightarrow[t\to 0]{}1$. Since
  $0<\varepsilon<\lambda$, there exists $0<\delta\leq r$ such that
  $\mathbf{F}(t,t)>1-\frac\varepsilon\lambda$ for every
  $0<t<\delta$. If $\varphi(p)-2\varepsilon\leq0$, then
  $\Omega_\mathbf{F}[\varphi](x)\geq 0\geq \varphi(p)-2\varepsilon$
  for every $x\in X$. Suppose that $\varphi(p)-2\varepsilon>0$. Then
  $
  1-\frac\varepsilon\lambda>1-\frac\varepsilon{\varphi(p)-\varepsilon}=
  \frac{\varphi(p)-2\varepsilon}{\varphi(p)-\varepsilon}$ because
  $0<\varphi(p)-\varepsilon<\lambda$. Accordingly, for a point
  $x\in \mathbf{O}(p,\delta)\setminus A\subset \mathbf{O}(p,\tau)$ and
  $t=d(x,A)<\delta$, it follows from Proposition
  \ref{proposition-special-fn-Tietze-v5:1} that
  \begin{align*}
    \Omega_\mathbf{F}[\varphi](x)
    &\geq \sup_{a\in \mathbf{O}_A(p,2\tau)}\varphi(a)\cdot \mathbf{F}^*(a,x)
      \geq [\varphi(p)-\varepsilon]\cdot \sup_{a\in
      \mathbf{O}_A(p,2\tau)}\mathbf{F}^*(a,x)\\ 
    &=[\varphi(p)-\varepsilon]\cdot \mathbf{F}(t,t)>
      [\varphi(p)-\varepsilon]\cdot
      \frac{\varphi(p)-2\varepsilon}{\varphi(p)-\varepsilon}=
      \varphi(p)-2\varepsilon.  
  \end{align*}
  In other words,
  $\Omega_\mathbf{F}[\varphi](x)\geq \varphi(p)-2\varepsilon$ for
  every $x\in \mathbf{O}(p,\delta)\setminus A$. Since $\delta\leq r$,
  this shows \eqref{eq:special-fn-Tietze-v4:2} and the proof is
  complete.
\end{proof}

\begin{proof}[Proof of Theorem \ref{theorem-special-fn-v2:1}]
  Let $\mathbf{F}:\Delta\to (0,1]$ be a Tietze extender,
  $\varphi\in C_+^*(A)$ and
  $f=\Omega_\mathbf{F}[\varphi]\in \ell_\infty^+(X)$ be the extension
  defined in \eqref{eq:special-fn-Tietze-v5:1}. According to
  Propositions \ref{proposition-Mandelkern:1} and
  \ref{proposition-UC-Extensions-rv2:1}, it suffices to show that for
  each $\tau>0$, the restriction of $f$ on
  ${X\setminus \mathbf{O}(A,\tau)}$ is uniformly continuous.  So, take
  $\tau>0$ and $\varepsilon>0$. Since $\mathbf{F}$ satisfies
  \eqref{eq:special-fn-Tietze-v6:3}, by Proposition
  \ref{proposition-special-fn-Tietze-v6:2}, there exists $\delta>0$
  such that \eqref{eq:special-fn-Tietze-v9:1} holds for the function
  $\mathbf{F}^*$. Accordingly, for $x,p\notin \mathbf{O}(A,\tau)$ with
  $d(x,p)<\delta$, it follows from \eqref{eq:special-fn-Tietze-v9:2}
  and \eqref{eq:special-fn-Tietze-v9:1} that
  $|f(x)-f(p)| \leq \sup_{a\in A}\varphi(a) \left|\mathbf{F}^*(a,x)-
    \mathbf{F}^*(a,p)\right| \leq \varepsilon\cdot \|\varphi\|$.
  Thus, $f\uhr X\setminus\mathbf{O}(A,\tau)$ is uniformly continuous
  and the proof is complete.
\end{proof}

Let $E$ and $V$ be Riesz spaces, and $\Omega:E^+\to V^+$ be a
positively homogeneous additive map, i.e.\ a positively homogeneous
map with the property that
\[
  \Omega(u+v)=\Omega(u)+\Omega(v),\quad \text{for every $u,v\in E^+$.}
\]
Then it has a unique extension to a positive linear map
$\Lambda:E\to V$. This is a classical result and can be found in
several books on Riesz spaces, see for instance \cite[Extension Lemma
on p.\ 51]{Gloden1966} and \cite[Lemma 2.10]{Jonge1977}, also
\cite[Lemma 20.1]{Zaanen1997}. Briefly, each $u\in E$ can be
represented in the form $u=u^+-u^-$, where
$u^+=u\vee \mathbf{0}\in E^+$ and $u^-=(-u)\vee \mathbf{0}\in
E^+$. This representation is minimal in the sense that $u=v-w$ for
$v,w\in E^+$ implies that $u^+\leq v$ and $u^-\leq w$. The extension
$\Lambda$ of $\Omega$ is now simply defined by
$\Lambda(u)= \Omega(u^+)- \Omega(u^-)$, $u\in E$.  Based on the same
idea, we will extend the operator
$\Omega_\mathbf{F}:\ell_\infty^+(A)\to \ell_\infty^+(X)$ to the entire
space $\ell_\infty(A)$. Namely, if $\mathbf{F}:\Delta\to (0,1]$ and
$\Omega_\mathbf{F}$ is as in \eqref{eq:special-fn-Tietze-v5:1}, then
we may define an extension operator
$\Theta_\mathbf{F}:\ell_\infty(A)\to \ell_\infty(X)$ by
\begin{equation}
  \label{eq:UC-Extensions-v8:1}
  \Theta_\mathbf{F}[\varphi]= \Omega_\mathbf{F}[\varphi^+]-
  \Omega_\mathbf{F}[\varphi^-], \quad \varphi\in \ell_\infty(A).
\end{equation}

In the theorem below, we say that
$\Theta:\ell_\infty(A)\to \ell_\infty(X)$ \emph{preserves the norm} if
$\|\Theta[\varphi]\|=\|\varphi\|$, for every
$\varphi\in \ell_\infty(A)$.

\begin{theorem}
  \label{theorem-UC-Extensions-v22:1}
  Let $\mathbf{F}:\Delta\to (0,1]$ be a Tietze extender, and
  $\Theta_\mathbf{F}:\ell_\infty(A)\to \ell_\infty(X)$ be the
  extension operator defined in \eqref{eq:UC-Extensions-v8:1}. Then
  $\Theta_\mathbf{F}$ is a $2$-Lipschitz map which is positive and
  positively homogeneous. Moreover, it preserves the norm and both
  continuity and uniform continuity.
\end{theorem}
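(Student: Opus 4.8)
The plan is to derive every assertion from Theorem~\ref{theorem-special-fn-v2:1} applied to the positive and negative parts $\varphi^+=\varphi\vee\mathbf{0}$ and $\varphi^-=(-\varphi)\vee\mathbf{0}$, exploiting the minimal decomposition $\varphi=\varphi^+-\varphi^-$ on which \eqref{eq:UC-Extensions-v8:1} is built. First I would record that $\Theta_\mathbf{F}$ is genuinely an extension operator: restricting \eqref{eq:UC-Extensions-v8:1} to $A$ and using that $\Omega_\mathbf{F}$ is an extension operator gives $\Theta_\mathbf{F}[\varphi]\uhr A=\varphi^+-\varphi^-=\varphi$. Positivity is then immediate, since for $\varphi\geq\mathbf{0}$ we have $\varphi^-=\mathbf{0}$ and $\Omega_\mathbf{F}[\mathbf{0}]=\mathbf{0}$ (from positive homogeneity of $\Omega_\mathbf{F}$, or directly from \eqref{eq:special-fn-Tietze-v5:1}), so $\Theta_\mathbf{F}[\varphi]=\Omega_\mathbf{F}[\varphi]\in\ell_\infty^+(X)$.

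For positive homogeneity I would use that $(\lambda\varphi)^+=\lambda\varphi^+$ and $(\lambda\varphi)^-=\lambda\varphi^-$ for $\lambda\geq0$, so that the positive homogeneity of $\Omega_\mathbf{F}$ yields $\Theta_\mathbf{F}[\lambda\varphi]=\lambda\Theta_\mathbf{F}[\varphi]$ straight from \eqref{eq:UC-Extensions-v8:1}. Continuity and uniform continuity would be handled together: since $t\mapsto t^+$ and $t\mapsto t^-$ are $1$-Lipschitz on $\R$, the parts $\varphi^+,\varphi^-$ lie in $C_+^*(A)$ (respectively, are uniformly continuous) whenever $\varphi\in C^*(A)$ (respectively, $\varphi\in C^*_u(A)$). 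Theorem~\ref{theorem-special-fn-v2:1} then places $\Omega_\mathbf{F}[\varphi^+]$ and $\Omega_\mathbf{F}[\varphi^-]$ in $C_+^*(X)$ (respectively, makes them uniformly continuous), and their difference $\Theta_\mathbf{F}[\varphi]$ inherits the same property.

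The $2$-Lipschitz estimate is where the factor $2$ must appear, precisely because $\Omega_\mathbf{F}$ is an isometry only on the positive cone. For $\varphi,\psi\in\ell_\infty(A)$ I would write $\Theta_\mathbf{F}[\varphi]-\Theta_\mathbf{F}[\psi]=\big(\Omega_\mathbf{F}[\varphi^+]-\Omega_\mathbf{F}[\psi^+]\big)-\big(\Omega_\mathbf{F}[\varphi^-]-\Omega_\mathbf{F}[\psi^-]\big)$ and combine the triangle inequality with the isometry of $\Omega_\mathbf{F}$ on $\ell_\infty^+$ to obtain
\[
  \|\Theta_\mathbf{F}[\varphi]-\Theta_\mathbf{F}[\psi]\|\leq
  \|\varphi^+-\psi^+\|+\|\varphi^--\psi^-\|.
\]
Since $t\mapsto t^{\pm}$ are $1$-Lipschitz, each summand is bounded by $\|\varphi-\psi\|$, giving the desired bound $2\|\varphi-\psi\|$.

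Finally, for norm preservation the inequality $\|\Theta_\mathbf{F}[\varphi]\|\geq\|\varphi\|$ follows at once from $\Theta_\mathbf{F}$ being an extension operator. For the reverse I would use the elementary observation that $|a-b|\leq\max\{a,b\}$ whenever $a,b\geq0$: at each $p\in X$ both $\Omega_\mathbf{F}[\varphi^+](p)$ and $\Omega_\mathbf{F}[\varphi^-](p)$ are nonnegative and bounded above by $\|\varphi^+\|\leq\|\varphi\|$ and $\|\varphi^-\|\leq\|\varphi\|$ respectively, whence $|\Theta_\mathbf{F}[\varphi](p)|\leq\|\varphi\|$. I expect this last point, the pairing of the ``max-trick'' with the one-sided bounds $\|\varphi^{\pm}\|\leq\|\varphi\|$, to be the only step that is not a purely formal transfer; all the rest is a direct transport of the properties of $\Omega_\mathbf{F}$ through $\varphi=\varphi^+-\varphi^-$. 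Throughout I would emphasise that \emph{no} additivity or linearity of $\Theta_\mathbf{F}$ is claimed, since $\Omega_\mathbf{F}$ is merely sublinear; the classical Extension Lemma for positively homogeneous additive maps is used only as motivation for \eqref{eq:UC-Extensions-v8:1}, not as an ingredient of the proof.
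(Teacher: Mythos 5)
Your proof is correct and follows essentially the same route as the paper: every property of $\Theta_\mathbf{F}$ is transported from Theorem~\ref{theorem-special-fn-v2:1} through the decomposition $\varphi=\varphi^+-\varphi^-$, and your $2$-Lipschitz estimate (triangle inequality, the isometry of $\Omega_\mathbf{F}$ on the positive cone, and the $1$-Lipschitzness of $t\mapsto t^{\pm}$) is exactly the paper's argument. The only point of divergence is the upper bound $\|\Theta_\mathbf{F}[\varphi]\|\leq\|\varphi\|$: the paper reopens the defining suprema and applies \eqref{eq:special-fn-Tietze-v9:2} to get
\[
  |\Theta_\mathbf{F}[\varphi](p)|\leq\sup_{a\in
  A}|\varphi^+(a)-\varphi^-(a)|\cdot\mathbf{F}^*(a,p)=\sup_{a\in
  A}|\varphi(a)|\cdot\mathbf{F}^*(a,p)\leq\|\varphi\|,
\]
whereas you stay at the level of the abstract properties of $\Omega_\mathbf{F}$, combining $|a-b|\leq\max\{a,b\}$ for $a,b\geq 0$ with the one-sided bounds $0\leq\Omega_\mathbf{F}[\varphi^{\pm}]\leq\|\varphi^{\pm}\|\leq\|\varphi\|$. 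Both arguments are valid; yours has the small advantage of not depending on the explicit supremum formula \eqref{eq:special-fn-Tietze-v5:1}, so it would work verbatim for any positive, norm-preserving extension operator on the positive cone in place of $\Omega_\mathbf{F}$, while the paper's version is a one-line computation once \eqref{eq:special-fn-Tietze-v9:2} is in hand.
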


\begin{proof}
  If $\varphi\in \ell_\infty(A)$ is (uniformly) continuous, then so
  are $\varphi^+$ and $\varphi^-$. Hence, by
  \eqref{eq:UC-Extensions-v8:1} and Theorem
  \ref{theorem-special-fn-v2:1}, $\Theta_\mathbf{F}$ preserves both
  continuity and uniform continuity. For the same reason,
  $\Theta_\mathbf{F}$ is positive and positively homogeneous. Take
  $\varphi\in \ell_\infty(A)$ and $p\in X\setminus A$. Then by
  \eqref{eq:special-fn-Tietze-v9:2}, \eqref{eq:special-fn-Tietze-v5:1}
  and \eqref{eq:UC-Extensions-v8:1},
  \begin{align*}
    |\Theta_\mathbf{F}[\varphi](p)|
    &=|\Omega_\mathbf{F}[\varphi^+](p)-
      \Omega_\mathbf{F}[\varphi^-](p)|\\
    &\leq \sup_{a\in
      A}|\varphi^+(a)-\varphi^-(a)|\cdot\mathbf{F}^*(a,p)\\
    &=\sup_{a\in
      A}|\varphi(a)|\cdot \mathbf{F}^*(a,p)\leq \sup_{a\in
      A}|\varphi(a)|=\|\varphi\|.   
  \end{align*}
  Accordingly, $\Theta_\mathbf{F}$ preserves the norm as
  well. Finally, we show that $\Theta_\mathbf{F}$ is
  $2$-Lipschitz. Indeed, if $\varphi,\psi\in \ell_\infty(A)$ and
  $a\in A$, then $|\varphi^+(a)-\psi^+(a)|\leq |\varphi(a)-\psi(a)|$
  and $|\varphi^-(a)-\psi^-(a)|\leq |\varphi(a)-\psi(a)|$.  Hence, by
  \eqref{eq:UC-Extensions-v8:1} and Theorem
  \ref{theorem-special-fn-v2:1},
  \[
    \left\|\Theta_\mathbf{F}[\varphi]-\Theta_\mathbf{F}[\psi] \right\|
    \leq \|\varphi^+-\psi^+\| + \|\varphi^--\psi^-\|\leq
    2\|\varphi-\psi\|.\qedhere
  \]
\end{proof}

We conclude with several remarks.

\begin{remark}
  \label{remark-UC-Extensions-v22:3}
  The extension operator $\Theta_\mathbf{F}$ defined in
  \eqref{eq:UC-Extensions-v8:1} is not necessarily an isometry. For
  instance, take $X=[-1,+\infty)\subset \R$ and $A=[-1,1]$. Then for
  $a\in A$ and $p\in X\setminus A$ we have that $d(p,a)= p-a$ and
  $d(p,A)=p-1$. Let $\mathbf{F}(s,t)=\mathbf{R}(s,t)=\frac ts$ be the
  Riesz function in \eqref{eq:special-fn-Tietze-v3:4}, which is a
  Tietze extender, see Example \ref{example-UC-Extensions-v7:1}. If
  $\varphi:A\to \R$ is defined by $\varphi(a)=\frac{3a+1}4$, then one
  can easily see that
  $\Theta_\mathbf{R}[\varphi](p)= \frac{p+3}{2(p+1)}$, whenever
  $p\in X\setminus A$. Similarly, if $\psi:A\to \R$ is defined by
  $\psi(a)=\frac{3a-1}4$, then
  $\Theta_\mathbf{R}[\psi](p)= \frac{3-p}{2(p+1)}$, for
  $p\in X\setminus A$. In fact, $\varphi$ and $\psi$ are uniformly
  continuous functions with $\psi=\varphi-\frac12$, so
  $\|\varphi-\psi\|=\frac12$. However, for $p\in X\setminus A$, we
  have that
  $|\Theta_\mathbf{R}[\varphi](p)-\Theta_\mathbf{R}[\psi](p)| = \frac
  p{p+1}\xrightarrow[p\to +\infty]{}1$.\hfill\qed
\end{remark}

\begin{remark}
  \label{remark-UC-Extensions-v17:1}
  Theorem \ref{theorem-special-fn-v2:1} is not valid for bounded
  functions $\varphi:A\to \R$ which may take negative values. For
  instance, let $X=\R$ and $A=(-\infty,0]$. Also, let
  $\mathbf{R}(s,t)=\frac ts$, $s\geq t>0$, be the Riesz function in
  \eqref{eq:special-fn-Tietze-v3:4} of Example
  \ref{example-UC-Extensions-v7:1}, and $-1_A:A\to \{-1\}$ be the
  constant function $-1$. Then for $x\in X\setminus A$ and $a\in A$,
  we have that $\mathbf{R}^*(a,x)=\frac x{x-a}=\frac x{x+|a|}$ and,
  accordingly,
  \[
    \Omega_\mathbf{R}[-1_A](x)= \sup_{a\in A}\left[
      -\mathbf{R}^*(a,x)\right]= -\inf_{a\in A} \mathbf{R}^*(a,x)=
    -\inf_{a\in A}\frac x{x+|a|}=0.\hfill\qed
  \]
\end{remark}

\begin{remark}
  \label{remark-UC-Extensions-v18:1}
  In contrast to Tietze extenders, the extension construction in
  \eqref{eq:UC-Extensions-v11:1} doesn't preserve continuity for
  bounded functions $\varphi:A\to [0,+\infty)$ which may take the
  value $0$. Indeed, let $A=[-1,0]\subset [-1,+\infty)=X$ and
  $\varphi:A\to [0,+\infty)$ be defined by $\varphi(a)=1+a$, $a\in
  A$. If $\mathbf{G}:\Delta\to [1,+\infty)$ is any function, then the
  extension $\mho_\mathbf{G}[\varphi]:X\to [0,+\infty)$ is not
  continuous because $\mho_\mathbf{G}[\varphi](0)=\varphi(0)=1$ and
  $\mho_\mathbf{G}[\varphi](p)=0$ for every $p\in X\setminus A$. \qed
\end{remark}

\section{Bohr's Extension Operator}
\label{sec:bohrs-extens-constr}

As mentioned in the Introduction, the book of Carath\'{e}odory
\cite{caratheodory:1918} contains another proof of Tietze's extension
theorem (Theorem \ref{theorem-special-fn-Tietze-v3:1}), it is credited
to Harald Bohr. In fact, Bohr's extension construction gives at once
an extension operator $\Phi:\ell_\infty(A)\to \ell_\infty(X)$ which
has the best properties comparing with the previous extension
operators. In this construction, for convenience, let
$\rho=d(\cdot,A)$ be the distance function to the set $A$. Whenever
$\varphi\in \ell_\infty(A)$ and ${x}\in X\setminus A$, Bohr associated
the bounded increasing function $\eta_{x,\varphi}:(0,+\infty)\to \R$
defined by
\begin{equation}
  \label{eq:various-ext:3}
  \eta_{x,\varphi}(t)=
  \begin{cases}
    \sup_{{a}\in \mathbf{O}_A({x},t)}\varphi({a})
    &\text{if $t>\rho(x)$, and}\\
    \inf \varphi &\text{if $t\leq \rho(x)$.}
  \end{cases}
\end{equation}
Next, using these functions, he defined an extension
$\Phi[\varphi]\in \ell_\infty(X)$ of $\varphi$ by the following
explicit formula:
\begin{equation}
  \label{eq:various-ext:4}
  \Phi[\varphi]({x})=
  \frac1{\rho({x})}\int_{\rho({x})}^{2\rho({x})} \eta_{x,\varphi}(t)\,
  dt,\quad 
  \text{${x}\in X\setminus A$.}
\end{equation}
Since $\eta_{x,\varphi}$ is bounded and increasing, it is Riemann
integrable. Hence, $\Phi[\varphi]$ is well defined. Also, by the
properties of the integral, we get at once that Bohr's extension
operator $\Phi:\ell_\infty(A)\to \ell_\infty(X)$ is isotone and, in
particular, positive. Furthermore, $\Phi$ is sublinear, i.e.\ both
positively homogeneous and subadditive. Indeed, if $\lambda\geq0$ and
$\varphi\in\ell_\infty(A)$, then by \eqref{eq:various-ext:4},
$\Phi[\lambda\varphi]=\lambda\Phi[\varphi]$ because
${\eta_{x,\lambda\varphi}=\lambda \eta_{x,\varphi}}$ for every
$x\in X\setminus A$, see \eqref{eq:various-ext:3}.  Similarly, for
$\varphi,\psi\in \ell_\infty(A)$, we get that
$\Phi[\varphi+\psi]\leq \Phi[\varphi]+\Phi[\psi]$ because
$\eta_{x,\varphi+\psi}(t)\leq \eta_{x,\varphi}(t)+\eta_{x,\psi}(t)$,
for every $x\in X\setminus A$ and $t>\rho(x)$. Finally, it is also
easy to see that $\Phi$ is an isometry. Namely, for
$\varphi,\psi\in\ell_\infty(A)$ and $x\in X\setminus A$, it follows
from \eqref{eq:special-fn-Tietze-v9:2} and \eqref{eq:various-ext:3}
that
$\left|\eta_{x,\varphi}(t)-\eta_{x,\psi}(t)\right|\leq
\|\varphi-\psi\|$ for every $t\geq\rho(x)$. Accordingly,

\begin{align*}
  |\Phi[\varphi](x)-\Phi[\psi](x)|
  &\leq \frac1{\rho({x})}\int_{\rho({x})}^{2\rho({x})}
    \left|\eta_{x,\varphi}(t) - 
    \eta_{x,\psi}(t)\right| dt\\
  &\leq \frac1{\rho({x})}\int_{\rho({x})}^{2\rho({x})}
    \|\varphi - 
    \psi\| dt=\|\varphi-\psi\|.
\end{align*}

Regarding other properties of $\Phi:\ell_\infty(A)\to \ell_\infty(X)$,
Bohr actually showed that it preserves continuity. The interested
reader is also referred to \cite{Friedman1982} and \cite{Arenas2002},
where these arguments were reproduced. Below we show that $\Phi$
preserves uniform continuity as well.

\begin{theorem}
  \label{theorem-special-fn-Tietze-v11:1}
  The extension operator $\Phi:\ell_\infty(A)\to \ell_\infty(X)$
  defined as in \eqref{eq:various-ext:4} is a sublinear isotone
  isometry which preserves both continuity and uniform continuity.
\end{theorem}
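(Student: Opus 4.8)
The plan is to lean on the fact that the sublinear, isotone and isometric properties of $\Phi$ were already verified in the discussion preceding the statement; what remains is the preservation of uniform continuity (preservation of continuity being Bohr's classical conclusion, which will follow from the pointwise analogue of the argument below). As with the earlier extension operators, I would deduce uniform continuity of $\Phi[\varphi]$ from Proposition \ref{proposition-Mandelkern:1}, i.e.\ by separately establishing that (i) $\Phi[\varphi]$ is uniformly continuous at the points of $A$ whenever $\varphi\in\ell_\infty(A)$ is uniformly continuous, and that (ii) for each $\tau>0$ the restriction $\Phi[\varphi]\uhr(X\setminus\mathbf{O}(A,\tau))$ is uniformly continuous. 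Throughout I keep Bohr's notation $\rho=d(\cdot,A)$ and the increasing functions $\eta_{x,\varphi}$ of \eqref{eq:various-ext:3}.

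For (i), fix $\varepsilon>0$ and use uniform continuity of $\varphi$ to pick $\delta_0>0$ with $|\varphi(a)-\varphi(b)|<\varepsilon$ whenever $a,b\in A$ and $d(a,b)<\delta_0$; set $\delta=\delta_0/3$. If $p\in A$ and $x\in X\setminus A$ with $d(p,x)<\delta$, then $\rho(x)\le d(p,x)<\delta$, so every $a\in\mathbf{O}_A(x,t)$ with $t\le 2\rho(x)$ satisfies $d(a,p)\le d(a,x)+d(x,p)<3\delta=\delta_0$. Hence for each $t\in(\rho(x),2\rho(x)]$ the (nonempty) supremum in \eqref{eq:various-ext:3} obeys $\varphi(p)-\varepsilon\le\eta_{x,\varphi}(t)\le\varphi(p)+\varepsilon$, and averaging over $[\rho(x),2\rho(x)]$ as in \eqref{eq:various-ext:4} gives $|\Phi[\varphi](x)-\varphi(p)|\le\varepsilon$. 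Since $\Phi[\varphi]\uhr A=\varphi$, this is exactly uniform continuity of $\Phi[\varphi]$ at the points of $A$.

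For (ii), the key device is the substitution $t=\rho(x)u$, which rewrites \eqref{eq:various-ext:4} as $\Phi[\varphi](x)=\int_1^2\eta_{x,\varphi}(\rho(x)u)\,du$. Fix $\tau>0$ and $x,y\in X\setminus\mathbf{O}(A,\tau)$, and put $\sigma=3d(x,y)/\tau$. Because $\rho$ is $1$-Lipschitz and $\rho(y)\ge\tau$, a short computation shows that any $a\in A$ with $d(a,x)<\rho(x)u$ satisfies $d(a,y)<\rho(x)u+d(x,y)\le\rho(y)(u+\sigma)$, so $\mathbf{O}_A(x,\rho(x)u)\subset\mathbf{O}_A(y,\rho(y)(u+\sigma))$ and hence $\eta_{x,\varphi}(\rho(x)u)\le\eta_{y,\varphi}(\rho(y)(u+\sigma))$ for every $u\in(1,2]$. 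Integrating over $[1,2]$ and shifting the variable, the common part cancels and only two integrals over intervals of length $\sigma$ survive, on which $|\eta_{y,\varphi}|\le\|\varphi\|$; together with the symmetric estimate this gives $|\Phi[\varphi](x)-\Phi[\varphi](y)|\le 2\sigma\|\varphi\|=\tfrac{6}{\tau}\|\varphi\|\,d(x,y)$, so the restriction is even Lipschitz.

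I expect step (ii) to be the main obstacle, since the integrand there compares two suprema taken over balls with different centres and radii that need not be nested, so a pointwise comparison of the suprema is unavailable. The resolution is the one-sided ball inclusion above together with the monotonicity and uniform boundedness of $\eta_{x,\varphi}$: these replace the delicate comparison of suprema by a harmless shift in the integration variable whose cost is controlled purely by $\|\varphi\|$ and $\tau$. It is worth noting that (ii) uses only boundedness of $\varphi$, so uniform continuity of $\varphi$ is needed solely in (i), near $A$ where the averaging window shrinks. Finally, preservation of continuity follows from the pointwise version of (i) (using continuity of $\varphi$ at each $p\in A$) together with (ii), which makes $\Phi[\varphi]$ locally Lipschitz, hence continuous, on the open set $X\setminus A$; combining these via the gluing remark preceding Proposition \ref{proposition-Mandelkern:1} completes the argument.
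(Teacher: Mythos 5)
Your proposal is correct and follows essentially the same route as the paper: the algebraic properties are delegated to the preceding discussion, continuity at the points of $A$ is obtained by the same three-radius ball argument, and the Lipschitz estimate on $X\setminus\mathbf{O}(A,\tau)$ rests on the same one-sided ball inclusion followed by a shift of the integration variable, with the boundary terms controlled by monotonicity and boundedness of $\eta_{x,\varphi}$. The only cosmetic difference is your normalisation $t=\rho(x)u$, where the paper instead shifts by $\delta=d(x,p)$ in the original variable (assuming $d(x,p)\leq\tau/3$) and arrives at the constant $\frac4\tau$ in place of your $\frac6\tau\|\varphi\|$.
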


\begin{proof}  
  Let $\varphi\in \ell_\infty(A)$ be a (uniformly) continuous
  function, and $f=\Phi[\varphi]$ be the associated extension in
  \eqref{eq:various-ext:4}. If $\lambda>0$, $\mu\in \R$ and
  $x\in X\setminus A$, then it follows from \eqref{eq:various-ext:3}
  that ${\eta_{x,\lambda\varphi +\mu}=\lambda
    \eta_{x,\varphi}+\mu}$. Hence, by \eqref{eq:various-ext:4},
  $\Phi[\lambda\varphi+\mu]=\lambda f+\mu$. Accordingly, we may assume
  that $\varphi:A\to [0,1]$. Moreover, in the rest of this proof we
  will simply write $\eta_x$ instead of $\eta_{x,\varphi}$.\smallskip

  First, we will show that $f$ is continuous at the points of $A$,
  also that it is strongly uniformly continuous on $A$ provided
  $\varphi$ is itself uniformly continuous. To this end, take
  $\varepsilon>0$ and $p\in A$. As in the previous proofs, using that
  $\varphi$ is continuous at $p$, there is $\delta>0$ (independent of
  $p$ provided that $\varphi$ is uniformly continuous) such that
  $|\varphi({a})-\varphi({p})|<\varepsilon$ for every
  $a\in \mathbf{O}_A(p,3\delta)$.  If
  ${x}\in \mathbf{O}({p},\delta)\setminus A$, then
  $\mathbf{O}({x},2\rho({x}))\subset \mathbf{O}({p},3\delta)$ because
  $\rho({x})\leq d({x},{p})<\delta$. Accordingly, see
  \eqref{eq:various-ext:3} and~\eqref{eq:various-ext:4},
  \begin{align*}
    \varphi({p})-\varepsilon\leq \inf_{{a}\in
    \mathbf{O}_A({p},3\delta)}&\varphi({a})
                                \leq \inf_{\rho(x)<t\leq 2\rho(x)} \eta_x(t)\\
                              &\leq
                                f({x})\leq \eta_{x}(2\rho({x}))\leq
                                \sup_{{a}\in \mathbf{O}_A({p},3\delta)}
                                \varphi({a})
                                \leq
                                \varphi({p})+\varepsilon. 
  \end{align*}
  Thus, $|f({x})-\varphi({p})|\leq \varepsilon$.\medskip

  Let $\tau>0$. We finalise the proof by showing that
  $f\uhr X\setminus \mathbf{O}(A,\tau)$ is uniformly continuous, in
  fact Lipschitz. So, let ${x},{p}\in X\setminus \mathbf{O}(A,\tau)$
  with $0<d(x,p)\leq\frac\tau3$. Then
  $|\rho({x})-\rho({p})|\leq d({x},{p})=\delta\leq\frac{\rho({x})}3$
  and, therefore,
  \[
    \rho({p})\leq \rho({x})+\delta \leq 2\rho({x})-2\delta\leq
    2\rho({p}).
  \]
  Moreover, $\eta_{p}(t)\geq \eta_{x}(t-\delta)$ for every $t>\delta$
  because $\mathbf{O}({x},t-\delta)\subset \mathbf{O}({p},t)$, see
  \eqref{eq:various-ext:3}. Thus, substituting in
  \eqref{eq:various-ext:4}, we get that
  \begin{align*}
    f({p})=\frac1{\rho({p})}\int_{\rho({p})}^{2\rho({p})}
    \eta_{p}(t) dt 
    &\geq
      \frac1{\rho({x})+\delta}
      \int_{\rho({x})+\delta}^{2\rho({x})-2\delta} 
      \eta_{x}(t-\delta) dt\\
    (s=t-\delta)\qquad &=
                         \frac1{\rho({x})+\delta}
                         \int_{\rho({x})}^{2\rho({x})-3\delta}  
                         \eta_{x}(s) ds.
  \end{align*}
  Since
  $f(x)= \frac1{\rho(x)}\int_{\rho({x})}^{2\rho({x})-3\delta}
  \eta_{x}(t)dt +\frac1{\rho(x)}\int_{2\rho({x})-3\delta}^{2\rho({x})}
  \eta_{x}(t)dt$ and $\eta_{x}\leq 1$, this implies that
  \begin{eqnarray*}
    f({x})-f({p})
    &\leq& \left[\frac1{\rho({x})}-
           \frac1{\rho({x})+\delta}\right] 
           \int_{\rho({x})}^{2\rho({x})-3\delta} \eta_{x}(t)dt +
           \frac1{\rho({x})}
           \int_{2\rho({x})-3\delta}^{2\rho({x})}
           \eta_{x}(t)dt\\ 
    &=& \frac\delta{\rho({x})[\rho({x})+\delta]}
        \int_{\rho({x})}^{2\rho({x})-3\delta} \eta_{x}(t)dt +
        \frac1{\rho({x})}\int_{2\rho({x})-3\delta}^{2\rho({x})}
        \eta_{x}(t)dt\\
    &\leq& \frac{\delta}{\rho(x)}\cdot \frac{\rho({x})-3\delta}{
           \rho({x})+\delta} + 
           \frac{3\delta}{\rho({x})}\leq 
           \frac{4\delta}{\rho({x})}\leq 
           \frac{4}{\tau} d(x,p).   
  \end{eqnarray*}
  Interchanging ${p}$ and ${x}$, this is equivalent to
  $|f ({p})- f ({x})|\leq \frac4\tau d({x}, {p})$. In particular, $f$
  is continuous and by Proposition \ref{proposition-Mandelkern:1}, it
  is also uniformly continuous whenever so is $\varphi$.
\end{proof}

\subsection*{Acknowledgements}
The author is very grateful to both referees for their helpful
suggestions and remarks.

\providecommand{\bysame}{\leavevmode\hbox to3em{\hrulefill}\thinspace}

\end{document}